\documentclass[12pt]{article}
\usepackage{amsmath,amsfonts,amssymb,amsthm}
\usepackage{color}
\usepackage{mathtools}

\usepackage{geometry}
\usepackage{fancyhdr}
\usepackage{mathrsfs}
\usepackage{enumitem}

\usepackage{array}
\usepackage{graphicx}
\usepackage{float}
\usepackage{appendix}
\usepackage[all]{xy}
\usepackage{hyperref}
\usepackage{verbatim}

\input amssym.def
\begin{document}
		\def \Z{ \mathbb{Z}}
		\def \C{\mathbb {C}}
		\def \R{\mathbb {R}}
		\def \Q{\mathbb {Q}}
	   \def \N{\mathbb {N}}
	   \def \g{\mathfrak{g}}
	   \def \h{\mathfrak{h}}
	   \def \c {\mathbf{c}}
       \def \Y{\mathcal{Y}}
        \def \L{\mathcal{L}}

	\def \be{\begin{equation}\label}
		\def \ee{\end{equation}}
	\def \bex{\begin{example}\label}
		\def \eex{\end{example}}
	\def \bl{\begin{lem}\label}
		\def \el{\end{lem}}
	\def \bt{\begin{thm}\label}
		\def \et{\end{thm}}
	\def \bp{\begin{prop}\label}
		\def \ep{\end{prop}}
	\def \br{\begin{rem}\label}
		\def \er{\end{rem}}
	\def \bc{\begin{coro}\label}
		\def \ec{\end{coro}}
	\def \bd{\begin{de}\label}
		\def \ed{\end{de}}
	\newcommand{\m}{\bf m}
	\newcommand{\n}{\bf n}
	\newcommand{\nno}{\nonumber}
	\newcommand{\nord}{\mbox{\scriptsize ${\circ\atop\circ}$}}
	\newtheorem{thm}{Theorem}[section]
	\newtheorem{prop}[thm]{Proposition}
	\newtheorem{coro}[thm]{Corollary}
	\newtheorem{conj}[thm]{Conjecture}
	\newtheorem{example}[thm]{Example}
	\newtheorem{lem}[thm]{Lemma}
	\newtheorem{rem}[thm]{Remark}
	\newtheorem{de}[thm]{Definition}
	\newtheorem{hy}[thm]{Hypothesis}
	\makeatletter \@addtoreset{equation}{section}
	\def\theequation{\thesection.\arabic{equation}}
	\makeatother \makeatletter
	\begin{center}
		{\Large \bf  On integral forms for vertex  superalgebras associated
with affine Lie superalgebras and their modules}
	\end{center}

	\begin{center}		
{Hongyan Guo$^{a,b}$
			\footnote{Partially supported by
				NSFC (Nos. 11901224, 12371027),
and supported by the Fundamental Research Funds for the Central Universities (grant no. XJ2026004301).},
		Hongju Zhao$^{c}$	
		\\
		$\mbox{}^{a}$ School of Mathematics and Statistics,
			and Hubei Key Lab-Math. Sci.,
			Central China Normal University, Wuhan 430079, China\\
			$\mbox{}^{b}$ Key Lab NAA-MOE,
			Central China Normal University, Wuhan 430079, China\\
       $\mbox{}^{c}$ School of Mathematical Sciences, Harbin Normal University, Harbin 150025, China}
\end{center}

\begin{abstract}
This paper studies integral forms for affine vertex superalgebras and their modules.
We first obtain integral forms for the universal enveloping superalgebra
$U(\hat{\g})$ of an affine Lie superalgebra $\hat{\g}$.
For $\g$ a basic classical Lie superalgebra,
we give another construction of integral forms for $U(\hat{\g})$
using Chevalley basis, generalizing Kostant-Garland integral form theory.
Then we apply the theory to construct integral forms for
vertex (operator) superalgebras based on
 affine Lie superalgebras and their modules,
we also
investigate
when an integral form
contains the conformal vector,
and integral forms in contragredient modules for vertex operator superalgebras.
	\end{abstract}

\section{Introduction}

Vertex (operator) superalgebras and their modules are natural
 generalizations of vertex (operator) algebras and their modules,
and play an important role in (higher dimensional) quantum field theory,
especially in the study of the 4d/2d duality theory which has aroused considerable attention recently.
Previously, the study of vertex superalgebras mainly focused on the field
of characteristic 0.
But the notions of vertex superalgebras and their modules
work over any commutative ring,
and so it is natural to consider vertex superalgebras over $\Z$.
In particular, it is natural to look for $\Z$-forms of vertex superalgebras,
by analogy with the construction of Lie superalgebras over $\Z$ using Chevalley bases.

Integral forms of vertex algebras and their modules have been
studied widely (cf. \cite{B1},
\cite{C}, \cite{DG}, \cite{GZ}, \cite{M1}, \cite{M2}, etc.).
In this paper,
we generalize related notions and results to integral forms of vertex (operator) superalgebras
and their modules,
and integral forms for vertex superalgebras constructed from affine Lie superalgebras
are studied in detail.
In the process, integral forms for the universal enveloping superalgebra
of an affine Lie superalgebra are constructed and will be used to construct
integral forms for affine vertex superalgebras.

In Section \ref{sec:2},
we study integral forms for the universal enveloping superalgebras
of affine Lie superalgebras.
  Let $\g$ be a finite-dimensional simple Lie superalgebra
  with a non-degenerate even supersymmetric invariant bilinear form.
  Let $\hat{\g}=\g\otimes\C[t,t^{-1}]\oplus \C\c$ be the associated affine Lie superalgebra.
  If $\g$ has an integral form $\g_{\Z}$,
  then a general construction of an integral form $U_{\Z}(\hat{\g})$ for the universal enveloping superalgebra $U(\hat{\g})$
  is given in Proposition \ref{prop:2.5}.
  For $\g$ a basic classical Lie superalgebra,
      except for types $A(1,1)$ and $D(2,1;a)$ with $a\notin \Z$,
 we provide an integral form of the universal enveloping superalgebra $U(\hat{\g})$
  with the help of a Chevalley basis of $\g$.
      This is a generalization of the Kostant-Garland $\Z$-form (including divided powers)
      for superalgebras (cf. \cite{FG}, \cite{G}, \cite{M}, \cite{P}, etc.).
More precisely, let $\hat{\g}$ be an affine Lie superalgebra with $\g$ the aforementioned basic classical Lie superalgebra.
Then $\hat{\g}$ has a Chevalley basis
	 $$\mathcal{C}:=\{X_{\alpha}(m)\mid \alpha\in \Delta\}\sqcup \{H_{i}(n)\mid (i,n)\in \hat{I}\},$$
 where $X_{\alpha}$, $\alpha\in\Delta$ and $H_{i}$, $i=1,\ldots,\ell$ form a Chevalley basis of $\g$.
 Let $K_{\Z}(\hat{\g})$ be the $\Z$-subalgebra of $U(\hat{\g})$ generated by the elements
\begin{eqnarray*}
{X_{\beta}(m)^{(r)}},\; \Lambda_{s}(H_{i}(jm')_{j=1}^{s}), \, X_{\gamma}(m'')
\end{eqnarray*}
for $\beta \in \Delta_{\bar{0}}$, $\gamma\in \Delta_{\bar{1}}$,
$(i,m')\in \hat{I}$,
  $m,m''\in \Z$, and $r,s\in \Z_+$.
 We show that $K_{\Z}(\hat{\g})$ is an integral form of the universal enveloping superalgebra $U(\hat{\g})$.
On the other hand,
as we need, we consider
 the $\Z$-subalgebra of $U(\hat{\g})$ generated by elements
	\begin{eqnarray*}
X_{\beta}(m)^{(r)},\;\; X_{\gamma}(m')
\end{eqnarray*}
	for $\beta \in \Delta_{\bar{0}}$, $\gamma\in \Delta_{\bar{1}}$,
   $m,m'\in \Z$, and $r\in \Z_+$.
Denote the subalgebra by $\tilde{U}_{\Z}(\hat{\g})$.
Then $\tilde{U}_{\Z}(\hat{\g})\subset K_{\Z}(\hat{\g})$ is also an integral form of $U(\hat{\g})$.

  In Section \ref{sec:3}, we first state some basic facts about
 integral forms of vertex superalgebras.
  Then we construct integral forms for vertex superalgebras
  based on affine Lie superalgebras,
  using the results of integral forms of the universal
  enveloping superalgebra of an affine Lie superalgebra constructed in Section \ref{sec:2}.
  For these integral forms, we exhibit a natural set of generating elements
  for affine vertex superalgebras and modules of arbitrary integral level
   (Proposition \ref{(prop3.4)} and Theorem \ref{(th3.9)}).
  Besides,
  we give conditions under which an integral form of a vertex operator superalgebra
  can be extended to include a multiple of the conformal vector $\omega$ (Theorem \ref{thomeg}).
  At the end of this section, we consider
  the construction of integral forms in contragredient modules,
  applying this to the vertex superalgebra which has a non-degenerate invariant bilinear form
  (Propositions \ref{prop4.3} and \ref{prop4.5}).

Throughout this paper, we use $\C$, $\Z$, $\Z_{+}$ and $\N$ to
denote the sets of complex numbers, integers, non-negative integers, and positive integers, respectively.

\section{Integral forms for the universal enveloping superalgebra of affine Lie superalgebras}\label{sec:2}

In this section, we construct integral forms for the universal enveloping superalgebra
 $U(\hat{\g})$ of the affine Lie superalgebra $\hat{\g}$,
 where $\g$ is a finite-dimensional simple Lie superalgebra
 with a non-degenerate even supersymmetric invariant bilinear form  $(\cdot,\cdot)$.
Besides, for $\g$ a basic classical Lie superalgebra
(excluding types $A(1,1)$ and $D(2,1;a)$ with $a\notin \Z$),
we give another construction of integral forms for $U(\hat{\g})$
through a Chevalley basis of $\g$ and divided powers of elements of $\hat{\g}$.

\subsection{The case of general Lie superalgebras}

Let $V=V_{\bar{0}}\oplus V_{\bar{1}}$ be a super vector space.
Given nonzero $u\in V_{\bar{i}}$, let the parity be $|u|=i$, $i\in\{0,1\}$.
We say $V_{\Z}\subseteq V$ is an {\em integral form} of $V$ as a super vector space
  if
  it is an integral form of $V$ as a vector space,
  i.e. $V_{\Z}\otimes_{\Z}\C\cong V$ as a vector space,
  $V_{\Z}$ is $\Z_2$-graded
 $V_{\Z}=(V_{\Z})_{\bar{0}}\oplus (V_{\Z})_{\bar{1}}$,
 and $V_{\Z}$ is compatible with the $\Z_2$-grading of $V$
  in the sense that
  \begin{eqnarray*}
  (V_{\Z})_{\lambda}  = (V_{\lambda})_{\Z} (:= V_{\Z} \cap V_{\lambda})\;\;\text{for}\; \lambda \in \Z_2.
  \end{eqnarray*}

\begin{de}
{\em
Let $A$ be a (Lie or associative) superalgebra.
An {\em integral form $A_{\Z}$} of $A$ is an integral form of $A$ as a super vector space
$A_{\Z}=(A_{\Z})_{\bar{0}}\oplus(A_{\Z})_{\bar{1}}$,
and $(A_{\Z})_{\lambda}(A_{\Z})_{\mu}\subseteq (A_{\Z})_{\lambda+\mu}$
for $\lambda,\mu\in\Z_{2}$.

Let $M$ be an $A$-module. Then $M_{\Z}$ is called an {\em integral form of $M$}
if $M_{\Z}$ is an integral form of $M$ as a super vector space
$M_{\Z}=(M_{\Z})_{\bar{0}}\oplus(M_{\Z})_{\bar{1}}$,
and the module action satisfies
$(A_{\Z})_{\lambda}(M_{\Z})_{\mu}\subseteq (M_{\Z})_{\lambda+\mu}$
for $\lambda,\mu\in\Z_{2}$.
}
\end{de}

Let $(\g,[\cdot,\cdot])$ be a finite-dimensional simple Lie superalgebra with a non-degenerate
even supersymmetric invariant bilinear form $(\cdot,\cdot)$.
Then we have an affine Lie superalgebra
\begin{eqnarray}
\hat{\g}=\g\otimes\C[t,t^{-1}]\oplus\C \c
\end{eqnarray}
with bracket
\begin{eqnarray}\label{eq:brac}
[a\otimes t^{m}, b\otimes t^{n}]=[a,b]\otimes t^{m+n}+m\delta_{m+n,0}(a,b)\c
\end{eqnarray}
for $a,b\in\g$, $m,n\in\Z$, where $\c$ is a central element of $\hat{\g}$.
Consider the triangular decomposition
$\hat{\g}=\hat{\g}_{-}\oplus \hat{\g}_{0}\oplus \hat{\g}_{+}$,
where
\begin{equation}
	\hat{\g}_{\pm}=\g \otimes t^{\pm 1}\C[t^{\pm 1}], \,\,\;
 \hat{\g}_{0}=\g\oplus \C\c,
\end{equation}
and let $\hat{\g}_{(\ge 0)} =\hat{\g}_{+}\oplus \hat{\g}_{0}$.
There is a natural $\Z_{2}$-grading on $\hat{\g}$ by
\begin{eqnarray}
\hat{\g}_{\bar{0}}=\g_{\bar{0}}\otimes\C[t,t^{-1}]\oplus\C\c, \quad
\hat{\g}_{\bar{1}}=\g_{\bar{1}}\otimes\C[t,t^{-1}].
\end{eqnarray}

Suppose $(\g, (\cdot,\cdot))$ has an integral form $\g_{\Z}$,
i.e. $\g_{\Z}$ is an integral form of $\g$ as a Lie superalgebra
 and the bilinear form $(\cdot,\cdot)$ is integer-valued on $\g_{\Z}$.
Then $\hat{\g}$ naturally has an integral form
\begin{equation}\label{eq:2.7}
	\hat{\g}_{\Z}= \g_{\Z}\otimes_{\Z}\Z[t,t^{-1}]\oplus \Z\mathbf{c},
\end{equation}
with $\Z_2$-grading
\begin{equation}
	(\hat{\g}_{\Z})_{\bar{0}}=(\g_{\Z})_{\bar{0}}\otimes_{\Z}\Z[t,t^{-1}]\oplus\Z\c,\,\,
	(\hat{\g}_{\Z})_{\bar{1}}=(\g_{\Z})_{\bar{1}}\otimes_{\Z}\Z[t,t^{-1}].
\end{equation}

For $a\in \g$ and $m\in \Z$,
we denote $a(m)=a\otimes t^m\in \hat{\g}$.
The following result of an integral form
of the universal enveloping superalgebra $U(\hat{\g})$
of the affine Lie superalgebra $\hat{\g}$ is straightforward to check.

\begin{prop}\label{prop:2.5}
Suppose $\{a_j\mid j=1,\ldots,r\}$ and $\{b_{j^{\prime}}\mid j^{\prime}=1,\ldots,s\}$ are $\Z$-bases
of $({\g}_{\Z})_{\bar{0}}$ and $(\g_{\Z})_{\bar{1}}$, respectively.
Assume that the bilinear form $(\cdot,\cdot)$ is integer-valued on the basis.
 Let $U_\Z(\hat{\g})$ be the $\Z$-span of elements of the form
	\begin{equation}\label{eq:pbw Z-basis}
\c ^{k}a_{j_1}(m_1)^{k_1}\cdots a_{j_l}(m_l)^{k_l}b_{j^{\prime}_1}(n_1)\cdots b_{j^{\prime}_{p}}(n_p)
		\end{equation}
for distinct $a_{j_1}(m_1), \ldots, a_{j_l}(m_l)\in (\hat{\g}_{\Z})_{\bar{0}}$
and distinct $b_{j^{\prime}_1}(n_1),\ldots, b_{j^{\prime}_{p}}(n_p)\in (\hat{\g}_{\Z})_{\bar{1}}$,
where $k\in \Z_+$, $1\le j_1\le \ldots\le j_l\le r$, $k_q\in \N$,
$m_1\le \ldots \le m_r\in \Z$, $1\le j^{\prime}_1\le \ldots\le j^{\prime}_p\le s$
and $n_1\le \ldots \le n_p\in \Z$.
Then $U_\Z(\hat{\g})$
	is an integral form of $U(\hat{\g})$ as an associative superalgebra
with a $\Z$-basis consisting of elements of the form (\ref{eq:pbw Z-basis}),
and for $i\in\{0,1\}$, $U_{\Z}(\hat{\g})_{\bar{i}}$ is the $\Z$-span of
elements of the form (\ref{eq:pbw Z-basis}) with $p\equiv i\pmod{2}$.
\end{prop}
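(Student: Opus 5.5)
The plan is to compare $U_\Z(\hat{\g})$ with the universal enveloping superalgebra of the $\Z$-Lie superalgebra $\hat{\g}_\Z$ and invoke the PBW theorem over $\Z$. Concretely, $\{\c\}\cup\{a_j(m)\}\cup\{b_{j'}(n)\}$ is a homogeneous $\Z$-basis of $\hat{\g}_\Z$ in (\ref{eq:2.7}). It is a Lie superalgebra over $\Z$ because $[\g_\Z,\g_\Z]\subseteq\g_\Z$, and by (\ref{eq:brac}) the central term $m\delta_{m+n,0}(a,b)\c$ has integer coefficient, since $(\cdot,\cdot)$ is integer-valued on the basis. We fix a total order on this basis: $\c$ first, then the even elements ordered lexicographically by $(j,m)$, then the odd elements ordered by $(j',n)$. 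The monomials (\ref{eq:pbw Z-basis}) are then exactly the ordered PBW monomials in which even basis elements may repeat and odd ones do not.

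\emph{Step 1: linear independence.} The basis elements of $\hat{\g}$ over $\C$ are the same elements. The PBW theorem for Lie superalgebras over $\C$ says the ordered monomials with no repeated odd factors form a $\C$-basis of $U(\hat{\g})$. Hence the monomials (\ref{eq:pbw Z-basis}) are $\C$-linearly independent, so they are a $\Z$-basis of their $\Z$-span $U_\Z(\hat{\g})$, and the natural map $U_\Z(\hat{\g})\otimes_\Z\C\to U(\hat{\g})$ is an isomorphism.

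\emph{Step 2: closure under multiplication.} This is the main step. I would show by induction that any product of basis elements of $\hat{\g}_\Z$, in any order, lies in the $\Z$-span of ordered monomials. The induction runs on the length of the word and, for fixed length, on the number of inversions. Swapping two adjacent out-of-order factors $x,y$ uses $xy=(-1)^{|x||y|}yx+[x,y]$. The bracket $[x,y]$ is a $\Z$-combination of basis elements, so this step produces a word with fewer inversions plus shorter words, all with integer coefficients. A repeated odd factor $b=b_{j'}(n)$ is handled by $b^2=\frac12[b,b]$. The danger is that the factor $\frac12$ leaves $\Z$; the subtle point is whether $\frac12[b,b]\in\hat{\g}_\Z$.

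This is the obstacle I expect. For $b=b_{j'}(n)$ we have
\[
[b,b]=[b_{j'},b_{j'}](2n)+n\delta_{2n,0}(b_{j'},b_{j'})\c=[b_{j'},b_{j'}](2n).
\]
In general $[b_{j'},b_{j'}]$ need not be divisible by $2$ in $\g_\Z$. If it is not, $b^2$ is not in the $\Z$-span of the monomials (\ref{eq:pbw Z-basis}), and the statement as written requires either that hypothesis or allowing $b^2$ as an element of the span. I would therefore first try to argue that the intended reading is the natural one. Namely, $U_\Z(\hat{\g})$ is the $\Z$-subalgebra generated by $\hat{\g}_\Z$, and the ordered monomials span it over $\Z$ once one works in $U(\hat{\g}_\Z)$, where $b^2=\frac12[b,b]$ makes sense as a relation. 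In the proof I would add the standing assumption that $\frac12[x,x]\in\g_\Z$ for odd $x\in\g_\Z$. This holds for the Chevalley bases used later, and it makes the reduction integral.

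\emph{Step 3: the grading.} Each monomial is homogeneous of parity $p \bmod 2$, because $\c$ and the $a_j(m)$ are even. Every element of $U_\Z(\hat{\g})$ is a unique $\Z$-combination of such monomials, so $U_\Z(\hat{\g})$ splits as the direct sum of the spans with $p$ even and $p$ odd. These summands coincide with $U_\Z(\hat{\g})\cap U(\hat{\g})_{\bar i}$, since the monomials are also a basis of $U(\hat{\g})$ compatible with its grading. The products of homogeneous pieces respect parity because the reduction in Step 2 preserves parity. This gives the required compatibility of $U_\Z(\hat{\g})$ with the $\Z_2$-grading.
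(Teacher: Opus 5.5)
Your Steps 1--3 are the standard PBW straightening argument, which is all the paper offers: it calls the proposition straightforward and gives no proof. The obstacle you flag in Step 2 is a genuine error in the statement, not a weakness of your argument. Since $b_{j'}(n)\in U_{\Z}(\hat{\g})$, $b_{j'}(n)^2=\tfrac12[b_{j'},b_{j'}](2n)$, and the listed monomials are a $\C$-basis of $U(\hat{\g})$, closure under multiplication holds only if $[b_{j'},b_{j'}]\in 2\g_{\Z}$ for every $j'$.

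This condition can fail even when all the stated hypotheses hold. Take $\mathfrak{osp}(1|2)$ with brackets
$[h,e]=2e$, $[h,f]=-2f$, $[e,f]=h$, $[h,x]=x$, $[h,y]=-y$, $[e,x]=[f,y]=0$, $[e,y]=-x$, $[f,x]=-y$, $[x,x]=2e$, $[y,y]=-2f$, $[x,y]=h$,
and invariant form $(h,h)=2$, $(e,f)=1$, $(x,y)=2$. The lattice $\g_{\Z}=\Z h\oplus\Z(2e)\oplus\Z f\oplus\Z x\oplus\Z y$ is closed under the bracket, and the form is integral on it. Yet $x(0)^2=e(0)$ is $\tfrac12$ times the basis monomial $(2e)(0)$, so it does not lie in $U_{\Z}(\hat{\g})$.

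Consequently your ``intended reading'' does not rescue the statement. The $\Z$-subalgebra of $U(\hat{\g})$ generated by $\hat{\g}_{\Z}$ really does contain $b^2=\tfrac12[b,b]$. In the enveloping algebra of $\hat{\g}_{\Z}$ over $\Z$ one only has $2b^2=[b,b]$, so $b^2=\tfrac12[b,b]$ is not available as a relation there.

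The correct repair is the extra hypothesis you settle on.
\begin{itemize}
\item It is necessary, by the counterexample above.
\item It suffices to impose it on the basis, since $[x,x]=\sum_j c_j^2[b_j,b_j]+2\sum_{i<j}c_ic_j[b_i,b_j]$ for $x=\sum_j c_jb_j$.
\item As you note, it holds for the Chevalley bases used later.
\end{itemize}
With this hypothesis, your induction on length and inversions stays integral, and the rest of your argument is complete.

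A minor point: read literally, the statement requires $j_1\le\cdots\le j_l$ and $m_1\le\cdots\le m_l$ separately. That does not give a spanning set, since neither ordering of $a_1(1)a_2(0)$ qualifies. So the lexicographic order on $(j,m)$ that you adopt is the reading actually needed.
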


Consider the integral form $U_{\Z}(\hat{\g})$ constructed in Proposition \ref{prop:2.5}.
Let $U_{\Z}(\hat{\g}_{\pm})$ be the $\Z$-span of ordered
  products in $a_j(\pm m),b_{j^{\prime}}(\pm n)$
 with nondecreasing order of powers of $t^{\pm 1}$,
 where $j=1,\ldots,r$, $j^{\prime}=1,\ldots,s$, $m, n\in \N$.
 Let $U_{\Z}(\hat{\g}_{0})$ be the $\Z$-span of ordered products in
  $\c$, $a_j(0)$, $b_{j^{\prime}}(0)$ for $j=1,\ldots,r$, $j^{\prime}=1,\ldots,s$.
   Then $U_{\Z}(\hat{\g}_{\pm})$ and $U_{\Z}(\hat{\g}_{0})$ are
   integral forms of $U(\hat{\g}_{\pm})$ and $U(\hat{\g}_{0})$, respectively.
   We order the products (\ref{eq:pbw Z-basis}) in such a way that
   \begin{equation}\label{eq3.12}
	U_{\Z}(\hat{\g}) = U_{\Z}(\hat{\g}_{-}) U_{\Z}(\hat{\g}_{0}) U_{\Z}(\hat{\g}_{+}).
\end{equation}
Clearly, $U_{\Z}(\hat{\g}_{\pm})$ is $\Z$-graded
\begin{equation}\label{eq:2.15}
	U_{\Z}(\hat{\g}_{\pm})=\coprod_{n\in \Z}( U_{\Z}(\hat{\g}_{\pm}) \cap U(\hat{\g}_{\pm})_{n}),
\end{equation}
where  $U(\hat{\g}_{\pm})_{n}$ is the linear span of elements of the form
\begin{equation}
	a_{j_1}(m_{1})^{k_1}\cdots a_{j_l}(m_{l})^{k_l}b_{j^{\prime}_1}(n_{1})\cdots b_{j^{\prime}_p}(n_{p})
\end{equation}
for $k_q\in \N$ with $k_1 m_1 +\cdots +k_lm_l+n_1+\cdots+n_p=n$.

\subsection{The case of basic classical Lie superalgebras}\label{sec:2.2}

Recall the definition and related results of basic classical Lie superalgebras from \cite{Kac}, \cite{Kac2}, etc.
Let $\g=\g_{\bar{0}}\oplus \g_{\bar{1}}$ be a basic classical Lie superalgebra,
and fix a Cartan subalgebra $\h\subseteq \g_{\bar{0}}$.
Consider the root space decomposition
\begin{equation}
\g={\h}\oplus \bigoplus_{\alpha\in \Delta}\g_{\alpha},
\end{equation}
where $\g_{\alpha}=\{x\in \g \mid [h,x]=\alpha(h)x \text{ for all }h\in \h\}$
 is the root space associated to the root $\alpha$,
 and $\Delta=\Delta_{\bar{0}}\cup \Delta_{\bar{1}}$ is the root system with
\begin{equation}
	\begin{aligned}
		&\Delta_{\bar{0}}=\{\alpha\in \h^*\setminus\{0\}\mid\g_{\alpha}\cap \g_{\bar{0}}\ne 0\}\;\;(even\; roots),\\
		&\Delta_{\bar{1}}=\{\alpha\in \h^*\mid \g_{\alpha}\cap \g_{\bar{1}}\ne 0\}\;\;(odd\; roots).
	\end{aligned}
\end{equation}

For $\alpha \in \Delta$,
let $H_{\alpha}$ be the coroot associated with $\alpha$ (see \cite[Definition 2.20]{IK}).
Then
$H_{-\alpha}=-H_{\alpha}$, $H_{2\alpha}=\frac{1}{2}H_{\alpha}$ if $2\alpha \in \Delta$,
and (see \cite[Lemma 2.23]{IK})
\begin{equation}\label{eq:2.23}
 		\alpha(H_{\alpha})=\begin{cases}
 		0,\;\; \text{if}\;\; (\alpha,\alpha ) =0 ;\\
 		2,\;\;\text{if}\;\;( \alpha,\alpha ) \ne0.
 		\end{cases}
 	\end{equation}

  Fix a distinguished simple root system
 $\Pi=\{\alpha_1,\ldots,\alpha_{\ell}\}$ for $\g$,
 and for simplicity, denote $H_i=H_{\alpha_i}$ the simple coroot for $i=1,\ldots,\ell$.
As we need, we recall the definition of a Chevalley basis for a basic classical Lie superalgebra $\g$
(see \cite[Definition 3.2.1]{FG}, etc.).
The existence of a Chevalley basis of $\g$ was first shown in
\cite[Theorem 3.9]{IK} (see also Theorem 3.3.1 of \cite{FG}, etc.).

\begin{de}\label{(de_cheva for basic)}
	{\em Let $\g$ be a basic classical Lie superalgebra
but not type $A(1,1)$ or $D(2,1;a)$ with $a\notin \Z$.
A {\em Chevalley basis} of $\g$ is a homogeneous $\C$-basis
$\{H_1,\ldots,H_\ell\}\sqcup\{X_{\alpha}\;|\;\alpha\in \Delta\}$
such that
		\begin{itemize}
			\item[(1)] $\{H_1,\ldots,H_\ell\}$ is a $\C$-basis of $\h$,
    and $\h_{\Z}:=\operatorname{span}_{\Z}\{H_1,\ldots,H_\ell\}=\operatorname{span}_{\Z}\{H_{\alpha} \ | \ \alpha\in \Delta\}.$
			\item[(2)] $[H_i,H_j]=0,$ $[H_i,X_{\alpha}]=\alpha(H_i)X_{\alpha}$
   for all $1\le i,j\le \ell$ and $\alpha\in \Delta.$
			\item[(3)] $[X_{\alpha},X_{-\alpha}]=\sigma_{\alpha}H_{\alpha}$
for all $\alpha\in \Delta$,
where $\sigma_{\alpha}:=-1$ if $\alpha\in \Delta_{\bar{1}}^{-}$
 (the negative roots in $\Delta_{\bar{1}}$),
 $\sigma_{\alpha}:=1$ otherwise.
		\item[(4)] $[X_{\alpha},X_{\beta}]=c_{\alpha,\beta}X_{\alpha+\beta}$
for $\alpha,\beta\in \Delta$, $\alpha\ne -\beta$, where
			\begin{itemize}
				\item[(4.0)] if $\alpha+\beta\notin \Delta$,
    then $c_{\alpha,\beta}=0$ and $X_{\alpha+\beta}:=0$;
				\item[(4.1)] if $\alpha\in \Delta_{\bar{0}}$
or $\beta \in \Delta_{\bar{0}}$ (we assume that $\alpha \in \Delta_{\bar{0}}$)
 and $\alpha+\beta\in \Delta$, then $c_{\alpha,\beta}=\pm(p+1)$;
				\item[(4.2)] if $\alpha, \beta\in \Delta_{\bar{1}}$ with
$( \alpha,\alpha) \ne0$ or $( \beta,\beta) \ne0$ (we assume $( \alpha,\alpha) \ne0$)
and $\alpha+\beta \in \Delta$, then $c_{\alpha,\beta}=\pm(p+1)$;
				\item[(4.3)]  if $\alpha, \beta\in \Delta_{\bar{1}}$
with $( \alpha,\alpha) =0=(\beta,\beta) $,
 and $\alpha+\beta \in \Delta$,
  then $c_{\alpha,\beta}=\pm\beta(H_{\alpha}).$
			\end{itemize}
			Here $p:=\operatorname{max}\{i\mid \beta -i\alpha \in \Delta\}$.
	\end{itemize}}
\end{de}

\begin{rem}
	{\em A suitably defined Chevalley basis
for the basic classical Lie superalgebra of type $A(1,1)$
was also proved to exist,
but with the root system $\Delta$ replaced by a generalized version $\tilde{\Delta}$
(see \S 6.1 of \cite{FG}).
 A Chevalley basis for the basic classical Lie superalgebra of type
   $D(2,1;a)$ with $a\notin \Z$ exists over the ring $\Z[\alpha]$
   (see \S 3.2 of \cite{Ga}).
   }
\end{rem}

In the rest of this subsection,
we assume that $\g$ is a basic classical Lie superalgebra
but not type $A(1,1)$ or $D(2,1;a)$ with $a\notin \Z$.
Set $\hat{\h}:=(\h\otimes 1)\oplus\C\c$.
We view $\c\in \hat{\h}^*$ by the identification of $\hat{\h}$ and
$\hat{\h}^{*}$.
The roots of $\hat{\g}$ with respect to the Cartan subalgebra
 $\hat{\h}$ have the form $\alpha+i\c $ for $\alpha\in \Delta\cup\{0\}$,
  $i\in \Z$ and $(i,\alpha)\ne (0,0)$ (see Lemma 18.2.3 of \cite{M2012}, etc.).
  The root space decomposition of $\hat{\g}$ can be written as
\begin{equation}\label{eq:dec}
	\hat{\g}=\hat{\h}\oplus \bigoplus\limits_{(i,\alpha)\ne (0,0)}(\g_{\alpha}\otimes t^i).
\end{equation}
Denote $\hat{\g}_{j\c}=\h\otimes t^{j}$ for $0\neq j\in\Z$,
and
$\hat{\g}_{\alpha+i\c}=\g_{\alpha}\otimes t^i$
for $\alpha\in\Delta, i\in\Z$.

Fix a Chevalley basis
$\{H_1,\ldots,H_\ell\}\sqcup\{X_{\alpha}\;|\;\alpha\in \Delta\}$ of $\g$.
Let $(\cdot,\cdot)$ be an
integer-valued bilinear form on the Chevalley basis of $\g$.
Let $I=\{1,\ldots,\ell\}$ and $\hat{I}=(I\times \Z)\cup \{(0,0)\}$ be the index sets,
and denote
\begin{equation}
	\begin{split}
		X_{\alpha}(m) =X_{\alpha}\otimes t^m,\;\;	H_{i}(n)=H_{i}\otimes t^n, \;\;
		H_{0}(0)=- H_{\theta}\otimes 1+\c,
	\end{split}
\end{equation}
where $\alpha\in\Delta$, $m\in \Z$, $(i,n)\in I\times \Z$
and $\theta$ is the highest root in $\Delta$.
Then the set
\begin{equation}\label{eq:cheva}
\mathcal{C}:=\{X_{\alpha}(m)\mid \alpha\in \Delta,m\in \Z\}\sqcup\{H_{i}(n) \;|\; (i,n)\in \hat{I} \}
\end{equation}
is a basis of $\hat{\g}$.
This basis is known as a {\em Chevalley basis} of $\hat{\g}$ (cf. \cite{G}, \cite{M}, etc.).
Let $\hat{\g}_{\Z}$ be the $\Z$-span of the Chevalley basis $\mathcal{C}$
	   of $\hat{\g}$.
	   Then $\hat{\g}_{\Z}$ is an integral form of $\hat{\g}$ as a Lie superalgebra with
	\begin{equation}
		\begin{split}
			&(\hat{\g}_{\Z})_{\bar{1}}=
    \mathrm{span}_{\Z}\{X_{\alpha}(m)\;|\;\alpha\in \Delta_{\bar{1}}, m\in \Z\},\\
			&(\hat{\g}_{\Z})_{\bar{0}}=
\mathrm{span}_{\Z}\{X_{\alpha}(m), H_{i}(n)\;|\;\alpha\in \Delta_{\bar{0}},m\in \Z, (i,n)\in \hat{I}\}.
		\end{split}
	\end{equation}

Now we construct an integral form for the universal enveloping superalgebra
 $U(\hat{\g})$ of the affine Lie superalgebra $\hat{\g}$.
 We first recall some notation.

 In $\C[[x]]$, set
\begin{eqnarray*}
\mathrm{exp}(x) =\sum\limits_{n\ge 0}\frac{x^{n}}{n!},\;\;
\mbox{and}\;\;
\mathrm{log}(1+x)=\sum\limits_{n\ge 1}(-1)^{n-1}\frac{x^n}{n}.
\end{eqnarray*}
Let $\C[x_1,x_2,\ldots]$ be the algebra of polynomials in the
	mutually commutative independent variables
	$x_1$, $x_2,\ldots$ with coefficients in $\C$.
For $s\in \Z_+$, define
$\Lambda_{s}=\Lambda_{s}(x_1,x_2,\ldots,x_s)\in \C[x_1,x_2,\ldots,x_s]$
to be the coefficient of $\zeta^s$
\begin{equation}\label{(eq:2.26)}
	\sum\limits_{s\ge 0}\Lambda_{s}\zeta^s=\mathrm{exp}(\sum\limits_{j\ge 1} \frac{x_j}{j}\zeta^j).
\end{equation}
For example,
$\Lambda_{0}=1$, $\Lambda_{1}(x_1)=x_1$,
$\Lambda_{2}(x_1,x_2)=\frac{x_1^2}{2!}+\frac{x_2}{2}$,
$\Lambda_{3}(x_1,x_2,x_3)=\frac{x_1^3}{3!}+\frac{x_1x_2}{2}+\frac{x_3}{3}$.


 For $H_{i}(jn)\in \mathcal{C}$, $j=1,\ldots,s$,
denote
\begin{eqnarray*}
	&\Lambda_{s}(H_{i}(jn)_{j=1}^{s})= \Lambda_{s}(H_{i}(n),H_{i}(2n),\ldots,H_{i}(sn))
\end{eqnarray*}
which is the coefficient of $\zeta^s$ in the function
$	\mathrm{exp}(\sum\limits_{j\ge 1}\frac{H_{i}(jn)}{j}\zeta^j)$.
In particular,
\begin{equation}
	\begin{split}
		\Lambda_{s}(H_{i}(j\cdot 0)_{j=1}^{s})&= \Lambda_{s}(H_{i}(0),H_{i}(0),\ldots,H_{i}(0))=  \binom{H_{i}(0)+s-1}{s}\\
		&=\frac{(H_{i}(0)+s-1)(H_{i}(0)+s-2)\cdots H_{i}(0)}{s!}
	\end{split}
\end{equation}
for $i=0, 1,\ldots, \ell$ (see \cite[Lemma 4.1.19]{M} or
 \cite[Lemma 3.1.4]{P}, etc.).
  For $X_{\alpha}(m) \in \mathcal{C}$, $s\in \Z_+$,
 denote the divided power
 \begin{equation}
 	X_{\alpha}(m)^{(s)}=\frac{X_{\alpha}(m)^s}{s!}.
 \end{equation}
Then $X_{\alpha}(m)^{(0)}=1$ is the unit,
 and $X_{\alpha}(m)^{(1)}=X_{\alpha}(m)$.

Fix a total order $\preceq$ on the Chevalley basis $\mathcal{C}$ (\ref{eq:cheva}) such that
$X_{\beta}(m) \preceq H_{i}(m')\preceq X_{\gamma}(m'')$
for $\beta\in \Delta_{\bar{0}}$, $\gamma\in \Delta_{\bar{1}}$,
$(i,m')\in \hat{I}$, $m, m''\in \Z$.
Let $K_{\Z}(\hat{\g})$ be the $\Z$-subalgebra of $U(\hat{\g})$ generated by the elements
\begin{eqnarray}\label{eq:gen}
{X_{\beta}(m)^{(r)}},\; \Lambda_{s}(H_{i}(jm')_{j=1}^{s}), \, X_{\gamma}(m'')
\end{eqnarray}
for $\beta \in \Delta_{\bar{0}}$, $\gamma\in \Delta_{\bar{1}}$,
$(i,m')\in \hat{I}$,
  $m,m''\in \Z$, and $r,s\in \Z_+$.
Denote by
\begin{equation}\label{eq:genpol}
	\begin{split}
		&f_{\beta}=X_{\beta}(m_1)^{(s_{\beta,1})}\cdots X_{\beta}(m_p)^{(s_{\beta,p})};\\
	&f_{H_i}=\Lambda_{s_{i,1}}(H_{i}(jm_1')_{j=1}^{s_{i,1}})\cdots\Lambda_{s_{i,k}}(H_{i}(jm_k')_{j=1}^{s_{i,k}});\\
	&f_{\gamma}=X_{\gamma}(m_1'')^{s_{\gamma,1}}\cdots X_{\gamma}(m_q'')^{s_{\gamma,q}}
	\end{split}
\end{equation}
for $\beta\in \Delta_{\bar{0}}$, $\gamma \in \Delta_{\bar{1}}$,
 $(i,m_1'),\ldots, (i,m_k') \in \hat{I}$, $m_1\le \ldots \le m_p$,
$m_1'\le \ldots \le m_k'$,  $m_1'' \le \ldots \le m_q''$,
 $p,q,k,s_{\beta,l}$, $s_{i,l}\in \Z_+$, and $s_{\gamma,l}\in \{0,1\}$.
We call
{\em monomials} the products of the elements of the form
\begin{eqnarray}\label{monomials}
	\prod_{\beta\in \Delta_{\bar{0}}}f_{\beta}, \,\, \prod_{i\in I\cup \{0\}}f_{H_i},\,\,\text{or} \,\,\prod_{\gamma\in\Delta_{\bar{1}}}f_{\gamma}.
\end{eqnarray}

We identify $\hat{\g}$ with its image in
$U(\hat{\g})$ by the PBW theorem.
 Consider the canonical filtration of $U(\hat{\g})$ (cf. Section 6.3 of \cite{M2012})
\begin{eqnarray*}
\C=U(\hat{\g})_{0}\subseteq U(\hat{\g})_{1}\subseteq \ldots,
\end{eqnarray*}
where
$U(\hat{\g})_{1}=\C+\hat{\g}$,
and $U(\hat{\g})_{n}=(U(\hat{\g})_{1})^{n}$ for $n\in\N$;
that is, $U(\hat{\g})_{n}$ is the span of all products
$u_{1}u_{2}\cdots u_{n}$ with $u_{i}\in U(\hat{\g})_{1}$.
For $u\in U({\hat{\g}})$, we say the {\em degree} of $u$,
denoted by $\mathrm{deg} \,u$, to be the smallest integer $j$ such that $u\in U({\hat{\g}})_{j}$.

\begin{lem}\label{(key Lem)}
	\begin{itemize}
		\item[(1)] Let $C_1$ and $C_2$ be any two elements listed in (\ref{eq:gen}).
 Then
		\begin{equation}
			C_2C_1=\pm C_1C_2+P,
		\end{equation}
		where $P$ is a $\Z$-linear combination of products $Q$ of monomials in (\ref{monomials})
such that $\mathrm{deg}(Q)<\mathrm{deg}(C_1)+\mathrm{deg}(C_2)$.
		\item[(2)] For $r,s\in \Z_{+}$,
		$\beta\in \Delta_{\bar{0}}$, $(i,n)\in\hat{I}$,
		we have
		\begin{equation}
			X_{\beta}(m)^{(r)}	X_{\beta}(m)^{(s)}=\binom{r+s}{s}X_{\beta}(m)^{(r+s)}+P_1,
		\end{equation}
		\begin{equation}\label{(eq:2.42)}
			\Lambda_{r}(H_{i}(jn)_{j=1}^{r})\Lambda_{s}(H_{i}(jn)_{j=1}^{s})=\binom{r+s}{s}	\Lambda_{r+s}(H_{i}(jn)_{j=1}^{r+s})+P_2,
		\end{equation}
		where $P_1$, $P_2$ are $\Z$-linear combinations products $Q$ of monomials in (\ref{monomials}) such that
		$\mathrm{deg}(Q)<r+s.$
	\end{itemize}
\end{lem}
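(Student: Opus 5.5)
The plan is to follow the classical Kostant--Garland strategy (as in \cite{G}, \cite{M}, \cite{P}) and reduce everything to commutation formulas among rank-one generators. The odd generators $X_\gamma(m'')$ are handled using the super commutator.

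For part (1), I would split into cases according to the types of $C_1,C_2$ in (\ref{eq:gen}).
\begin{itemize}
\item[(a)] Two even root vectors, $X_\beta(m)^{(r)}$ and $X_{\beta'}(m')^{(s)}$. If $\beta'\neq\pm\beta$, or $\beta'=\beta$ with $m\ne m'$, I would use the identity $\exp(\mathrm{ad}\,x)$ on divided powers. That is,
\[
x^{(r)}y^{(s)}=\sum_{k}\big((\mathrm{ad}\,x)^{(k)}y\big)\cdots
\]
together with the fact from Definition \ref{(de_cheva for basic)}(4.1) that $(\mathrm{ad}\,X_\beta(m))^{(k)}$ preserves $\hat{\g}_\Z$. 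This holds because the string constants $c_{\alpha,\beta}=\pm(p+1)$ make $\frac{1}{k!}\prod c$ a product of binomial coefficients, exactly as in the Lie algebra case.
\item[(b)] The pair $X_\beta(m)^{(r)}$, $X_{-\beta}(n)^{(s)}$. I would invoke Garland's formula (\cite{G}, \cite[Lemma 4.1.19]{M}). It lives in the copy of $\widehat{\mathfrak{sl}_2}$ spanned by $X_{\pm\beta}(k)$, $H_\beta(k)$ and $\c$. It expresses the commutator as a $\Z$-combination of ordered products of $X_{-\beta}(\cdot)^{(\cdot)}$, $\Lambda_k(H_\beta(\cdot))$ and $X_\beta(\cdot)^{(\cdot)}$ of lower degree.
\item[(c)] Brackets of $\Lambda_s(H_i(jm')_{j=1}^s)$ with $X_\alpha(m)$ or $X_\alpha(m)^{(r)}$. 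Here I would use the generating-function identity
\[
\exp\Big(\sum_j \tfrac{H_i(jn)}{j}\zeta^j\Big)X_\alpha(m)=X_\alpha(m)\exp\Big(\sum_j\tfrac{H_i(jn)}{j}\zeta^j\Big)\prod(1-\zeta t^{n})^{-\alpha(H_i)}\text{-type factor}.
\]
This yields integral coefficients since $\alpha(H_i)\in\Z$ by \ref{(de_cheva for basic)}(1),(2).
\item[(d)] Pairs involving odd $X_\gamma(m'')$. These are of degree $1$, so $[X_\gamma(m''),C]$ is computed by iterating brackets. With an even $X_\beta(m)^{(r)}$ one gets
\[
X_\gamma X_\beta^{(r)}=\sum_k X_\beta^{(r-k)}\big((\mathrm{ad}(-X_\beta))^{(k)}X_\gamma\big),
\]
which is integral by (4.1). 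Two odd vectors satisfy $X_\gamma X_{\gamma'}=-X_{\gamma'}X_\gamma+[X_\gamma,X_{\gamma'}]_+$. The bracket is in $\hat{\g}_\Z$ by (3),(4.2),(4.3), and it can be rewritten as a combination of generators. For $H$-elements it is $H_\alpha(m)$, an integral combination of $H_i(m)$, which equals $\Lambda_1$; when $m=0$ one also needs $\c$, via $H_0(0)$.
\end{itemize}
Every correction term has strictly smaller filtration degree because the commutator in $U(\hat{\g})$ lowers degree in the associated graded (supersymmetric) algebra.

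For part (2), the first identity is exact with $P_1=0$. For (\ref{(eq:2.42)}), I would note that the $H_i(jn)$ commute up to central terms $jn\delta(\cdot)\c$. For $n\neq 0$ the relevant elements $H_i(jn)$, $j\ge1$, all have the same sign of mode, so they commute. Then $\sum\Lambda_r\zeta^r$ is an exponential, and I would compare the coefficients of $\zeta^r\eta^s$ in $E(\zeta)E(\eta)$ and in $E(\zeta+\eta)$. The discrepancy is a polynomial expression in the lower $\Lambda_k(H_i(jkn))$ and can be shown integral via the standard identity (\cite[Lemma 3.1.4]{P}). The case $n=0$ is the binomial identity for $\binom{H+s-1}{s}$.

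The main obstacle is case (b) together with the mixed odd cases. There one must verify that the Garland-type rewriting, and the bracket of two odd vectors of isotropic roots, which gives coefficient $\beta(H_\alpha)$ by (4.3), produce $\Z$-combinations of the prescribed monomials rather than merely elements of $U_\Z$. I would first try to reduce to rank-one subalgebras and quote \cite{FG} for the finite case, then loop-extend as in \cite{G}.
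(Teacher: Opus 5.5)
\paragraph{Comparison with the paper.} Your strategy is the paper's: a case split in which the purely even pairs are quoted from the Garland--Mitzman formulas and the odd generators are handled with super-commutators. Your expansion of $X_\gamma X_\beta^{(r)}$ is the paper's \eqref{eq:2.37}, and your item (c) is the generating-function identity behind its \eqref{eq:cs4}.

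\paragraph{The gap.} Your case list omits one non-commuting pair: $C_1=\Lambda_r(H_i(kn)_{k=1}^{r})$ and $C_2=\Lambda_s(H_j(lm)_{l=1}^{s})$ with $mn<0$ (the paper's Case 6). Part (2) only treats equal $(i,n)$, and your remark that the $H_i(jn)$ commute because their modes have the same sign does not apply here. In fact $[H_i(kn),H_j(lm)]=kn\,\delta_{kn+lm,0}(H_i,H_j)\c$, so reordering produces $\c$-dependent terms. You must show these are $\Z$-combinations of products of the allowed generators of lower degree, and $\c$ itself is not one of the generators in (\ref{eq:gen}).

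\paragraph{How to close it.} Say $n>0>m$; write $n=da$ and $-m=db$ with $\gcd(a,b)=1$, and set $E_{i,n}(\zeta)=\exp\bigl(\sum_{k\ge1}H_i(kn)\zeta^k/k\bigr)$. The two exponents have central commutator $-d(H_i,H_j)\c\,\log(1-\zeta^b\eta^a)$, so
\[
E_{i,n}(\zeta)E_{j,m}(\eta)=E_{j,m}(\eta)E_{i,n}(\zeta)\,(1-\zeta^{b}\eta^{a})^{-d(H_i,H_j)\c}.
\]
Taking the coefficient of $\zeta^r\eta^s$ gives
\[
\Lambda_r(H_i(kn)_{k=1}^{r})\Lambda_s(H_j(lm)_{l=1}^{s})=\sum_{t\ge 0}\binom{d(H_i,H_j)\c+t-1}{t}\Lambda_{s-at}(H_j(lm)_{l=1}^{s-at})\Lambda_{r-bt}(H_i(kn)_{k=1}^{r-bt}).
\]
The terms with $t\ge 1$ have degree at most $r+s-(a+b-1)t<r+s$. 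Since $\c=H_0(0)+H_\theta(0)$ with $H_\theta\in\h_\Z$, and $(H_i,H_j)\in\Z$, each $\binom{d(H_i,H_j)\c+t-1}{t}$ is an integer-valued polynomial of degree $t$ in the commuting elements $H_0(0),\dots,H_\ell(0)$. It is therefore a $\Z$-combination of products of $\binom{H_p(0)+u-1}{u}=\Lambda_u(H_p(j\cdot 0)_{j=1}^{u})$ of total degree at most $t$. With this case added, or quoted from \cite[Lemma 4.2.13]{M} as the paper does, your argument covers all pairs.

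\paragraph{A smaller point on part (2).} Comparing $E(\zeta)E(\eta)$ with $E(\zeta+\eta)$ does not by itself give integrality, since the difference of the exponents has coefficients $\frac{1}{j}\binom{j}{i}$, which are not integers in general. The input actually needed there is Garland's product lemma, \cite[Lemma 4.2.13(ii)]{M}, which is what the paper cites.
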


\begin{proof}
	The conclusion (2) follows from Lemma 4.2.13 (ii) of \cite{M}
	since
$ X_{\beta}(m),\,\, H_{i}(n)\in \hat{\g}_{\bar{0}}=\g_{\bar{0}}\otimes \C[t,t^{-1}]\oplus \C\c $
	for $\beta \in \Delta_{\bar{0}}$, $(i,n)\in\hat{I}$, $m\in\Z$,
and $\g_{\bar{0}}$ is a finite-dimensional simple Lie algebra.

Now we prove (1).
It can be directly checked that for the following $C_1$ and $C_2$,
 either $C_2C_1 = C_1C_2$ or $C_2C_1 = -C_1C_2$:
\begin{itemize}
	\item[(i)]\label{(1)}
	$C_1=X_{\alpha}(m)^{(r)}$, $C_{2}=X_{\beta}(n)^{(s)}$ for
$m,n\in\Z$, $\alpha, \beta\in \Delta$ with $r$ or $s=0$,
or $r,s\in \N$ and $\alpha+\beta \notin \Delta\cup\{0\}$.
	\item[(ii)]
	$C_1=\Lambda_{r}(H_{i}(kn)_{k=1}^{r})$,
	$C_{2}=\Lambda_{s}(H_{j}(lm)_{l=1}^{s})$ for $(i,n)$, $(j,m)\in \hat{I}$ ,
$r$ or $s=0$,
or $r,s\in \N$
with $mn\ge 0$.
\end{itemize}

Let $r,s\in \N$, $m,n\in \Z$.
It remains to show the following six cases.
	\begin{itemize}
		\item[{\em Case 1.}] $C_1=X_{\alpha}(m)$, $C_2=X_{\beta}(n)$
with $\alpha$, $\beta \in \Delta_{\bar{1}}$, $\alpha+\beta\in\Delta\cup\{0\}$;
		\item[{\em Case 2.}] $C_1=X_{\alpha}(m)^{(r)}$, $C_2=X_{\beta}(n)$
with $\alpha \in \Delta_{\bar{0}}$, $\beta \in \Delta_{\bar{1}}$, $\alpha+\beta\in\Delta$;
		\item[{\em Case 3.}] $C_1=X_{\alpha}(m)^{(r)}$, $C_2=X_{\beta}(n)^{(s)}$
with  $\alpha$, $\beta \in \Delta_{\bar{0}}$, $\alpha+\beta\in\Delta\cup\{0\}$.
		\item[{\em Case 4.}] $C_1=X_{\alpha}(m)$,
		$C_2=\Lambda_{s}(H_{i}(jn)_{j=1}^{s})$
		with $\alpha\in\Delta_{\bar{1}}$, $(i,n)\in \hat{I}$;
		\item[{\em Case 5.}] $C_1=X_{\alpha}(m)^{(r)}$,
		$C_2=\Lambda_{s}(H_{i}(jn)_{j=1}^{s})$
with $\alpha\in\Delta_{\bar{0}}$, $(i,n)\in \hat{I}$.
	\item[{\em Case 6.}] $C_1=\Lambda_{r}(H_{i}(kn)_{k=1}^{r}), C_1=\Lambda_{s}(H_{j}(lm)_{l=1}^{s})$
 for $r,s\in \N$,  $(i,n), (j,m)\in \hat{I}$ with $mn<0$.
\end{itemize}

For {\em Case 1},
if $\alpha+\beta \ne 0$, we have
\begin{equation}
	\begin{split}
		X_{\beta}(n)X_{\alpha}(m)
		&=-X_{\alpha}(m)X_{\beta}(n)+[X_{\beta}(n),X_{\alpha}(m)]\\
		&=-X_{\alpha}(m)X_{\beta}(n)+c_{\beta,\alpha}X_{\alpha+\beta}(m+n).
	\end{split}
\end{equation}
So that $P=c_{\beta,\alpha}X_{\alpha+\beta}(m+n)$,
$Q=X_{\alpha+\beta}(m+n)$ and
$\mathrm{deg}(Q)=1<
\mathrm{deg}(X_{\beta}(n))+\mathrm{deg}(X_{\alpha}(m))=2$.
	If $\alpha+\beta=0$, then
\begin{equation}
	\begin{split}
		X_{-\alpha}(n)X_{\alpha}(m)
		&=-X_{\alpha}(m)X_{-\alpha}(n)+[X_{-\alpha}(n),X_{\alpha}(m)]\\
		&=-X_{\alpha}(m)X_{-\alpha}(n)+\sigma_{\alpha}H_{\alpha}(m+n)+n\delta_{m+n,0}(X_{-\alpha},X_{\alpha})\c.
	\end{split}
\end{equation}
Set $P=\sigma_{\alpha}H_{\alpha}(m+n)+n\delta_{m+n,0}(X_{-\alpha},X_{\alpha})\c$.
Then $P$
 is an integral linear span of $Q=H_{i}(m+n)$ and $H_{j}(0)$
 for $(i,m+n)$, $(j,0)\in \hat{I}$, and $\mathrm{deg}(Q)=1<2$.
Note that $\Lambda_1(H_{i}(km)_{k=1}^{1})=\Lambda_1(H_{i}(m))=H_{i}(m)$ for $(i,m)\in \hat{I}$.

For {\em Case 2},
by Proposition 2.5.5 of \cite{Kac} or Proposition 1.3 of \cite{Kac2},
we can further divide it into two cases: $\alpha\ne c\beta$ for any $c\in\C$ or $\alpha=\pm 2\beta$.
If $\alpha=2\beta$, then $\alpha+\beta\notin \Delta$,
 and this is already considered in (i).
 	If $\alpha=-2\beta$,
 	then by
 	Definition \ref{(de_cheva for basic)},
 	$[X_{\beta}(n),X_{\alpha}(m)]
 	=\pm X_{\alpha+\beta}(m+n)=\pm X_{-\beta}(m+n)$.	
 By induction on $r$, we have
 \begin{equation}\label{eq:2.39}
 	X_{\beta}(n)X_{\alpha}(m)^{(r)}=X_{\alpha}(m)^{(r)}X_{\beta}(n)+c_{\beta,\alpha}X_{\alpha}(m)^{(r-1)}X_{-\beta}(m+n),
 \end{equation}
where $c_{\beta,\alpha}=\pm 1$.
 	If $\alpha\ne c\beta$,
 	 by Definition \ref{(de_cheva for basic)} and induction on $r$, we have
 \begin{eqnarray}\label{eq:2.37}
 	\begin{aligned}
 	&X_{\beta}(n)X_{\alpha}(m)^{(r)}=X_{\alpha}(m)^{(r)}X_{\beta}(n) \\
 	&\;\;\;\;+\sum\limits_{k=1}^{r}\left(\prod_{\ell=1}^{k} \epsilon_{\ell} \right)\binom{p+k}{k}X_{\alpha}(m)^{(r-k)} X_{\beta+k\alpha}(n+km),
\end{aligned}
\end{eqnarray}
 where $p=\mathrm{max}\{i \;|\; \beta-i\alpha \in \Delta\}$,
 $X_{\beta+k\alpha}(n+km):=0$ if $\beta+k\alpha \notin \Delta$,
 and $\epsilon_{\ell}\in\{\pm 1\}$ is such that
 $
 	[X_{\alpha}(m), X_{\beta+(\ell-1)\alpha}(n+\ell m-m)]=\epsilon_{\ell}(p+\ell) X_{\beta+\ell \alpha}(n+\ell m).
$
 Both the equations (\ref{eq:2.37}) and (\ref{eq:2.39})
show that the result holds for {\em Case 2}.

{\em Case 3},
{\em Case 5} and {\em Case 6} have been proved in Lemma 4.2.13 of
\cite{M}.
At last, we study {\em Case 4}.
For $\alpha\in \Delta_{\bar{1}}$,
we have $[H_{i}(n),X_{\alpha}(m)]=\alpha(H_{i})X_{\alpha}(m+n)$,
which is similar for $\alpha\in \Delta_{\bar{0}}$.
So we can get a similar result of Lemma 4.3.4 (iii) of \cite{M} for $\alpha\in \Delta_{\bar{1}}$.
Then by considering the coefficient of $\zeta_1^1\zeta_2^s$ in Lemma 4.3.4 (iii) of \cite{M},
we have
\begin{equation}\label{eq:cs4}
	\begin{aligned}
		\Lambda_{s}((H_{i}(jn))_{j=1}^{s})X_{\alpha}(m)
		&=X_{\alpha}(m)\Lambda_{s}((H_{i}(jn))_{j=1}^{s})\\
		&+\sum\limits_{k=1}^{s}\binom{\alpha(H_i)+k-1}{k}
		X_{\alpha}(m+kn)\Lambda_{s-k}((H_{i}(jn))_{j=1}^{s-k})
	\end{aligned}
\end{equation}
for $(i,n)\in I\times \Z$,
and for $(i,n)=(0,0)$,
\begin{eqnarray}\label{eq:cs4-2}
\begin{aligned}
&\binom{H_{0}(0)+\c+s-1}{s}X_{\alpha}(m)
=X_{\alpha}(m)\binom{H_{0}(0)+\c+s-1}{s} \\
&\quad+X_{\alpha}(m)\sum_{k=1}^{s}\binom{\alpha(H_0)+k-1}{k}\binom{H_{0}(0)+\c+s-k-1}{s-k}.
\end{aligned}
\end{eqnarray}
It follows from (\ref{eq:cs4}) and (\ref{eq:cs4-2}) that the result holds for {\em Case 4}.

\end{proof}

We now show that
 $K_{\Z}(\hat{\g})$ (see \eqref{eq:gen}) is an integral form of $U(\hat{\g})$.

\begin{thm}\label{(th 2.13)}
	Let $\g$ be a basic classical Lie superalgebra,
excluding types $A(1,1)$ and $D(2,1;a)$ with $a\notin \Z$.
Then
	the set of ordered monomials of the form
	\begin{equation}\label{PBW like basis}
		\prod_{\beta\in \Delta_{\bar{0}}}f_{\beta}\prod_{i\in I\cup \{0\}}f_{H_i} \,\,\prod_{\gamma\in\Delta_{\bar{1}}}f_{\gamma},\;\;
	\end{equation}
forms an integral basis of $K_{\Z}(\hat{\g})$,
 where $f_{\beta}$, $f_{H_{i}}$ and $f_{\gamma}$ are defined in (\ref{eq:genpol}).
 So that $K_{\Z}(\hat{\g})$ is an integral form of $U(\hat{\g})$, where
	$K_{\Z}(\hat{\g})_{\bar{i}}$ is the $\Z$-span of
(ordered) products (\ref{PBW like basis}) with
$\sum\limits_{\gamma\in \Delta_{\bar{1}}}( s_{\gamma,1}+\ldots+ s_{\gamma,q})\equiv i \pmod 2$
 for $i \in \{0,1\}$.
\end{thm}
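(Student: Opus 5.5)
The argument has two halves. First, the ordered monomials \eqref{PBW like basis} span $K_{\Z}(\hat{\g})$ over $\Z$. Second, they are linearly independent over $\C$ and span $U(\hat{\g})$ over $\C$. Together these make $K_{\Z}(\hat{\g})$ a free $\Z$-module with $K_{\Z}(\hat{\g})\otimes_{\Z}\C\cong U(\hat{\g})$.

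\textbf{Spanning over $\Z$.} Let $M$ be the $\Z$-span of the ordered monomials. Each ordered monomial is a product of generators \eqref{eq:gen}, so $M\subseteq K_{\Z}(\hat{\g})$. Since $1\in M$, the reverse inclusion follows once $M$ is closed under left multiplication by every generator $C$; equivalently, every product of generators lies in $M$.

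We prove this by induction on the total degree $d=\sum \deg(C_j)$ of a word $C_1C_2\cdots C_k$ in the generators, with a secondary induction on the number of inversions relative to the fixed order $\preceq$.
\begin{itemize}
\item An out-of-order adjacent pair is swapped using Lemma \ref{(key Lem)}(1). The swap costs a sign $\pm$, and it produces a $\Z$-combination of products of monomials of strictly smaller total degree, which are handled by the outer induction.
\item Two adjacent factors with the same underlying basis element are merged using Lemma \ref{(key Lem)}(2), namely $X_\beta(m)^{(r)}X_\beta(m)^{(s)}=\binom{r+s}{s}X_\beta(m)^{(r+s)}+P_1$ and its analogue for the $\Lambda$'s. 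For odd $\gamma$, one has $X_\gamma(m)^2=\tfrac12[X_\gamma(m),X_\gamma(m)]$. This lies in $\hat{\g}_{\Z}$ only if $[X_\gamma,X_\gamma]\in 2\g_{\Z}$, which holds by Definition \ref{(de_cheva for basic)}(4.2). Indeed, if $2\gamma\in\Delta$ then $(\gamma,\gamma)\neq0$ and $p=\max\{i\mid \gamma-i\gamma\in\Delta\}=1$, so $c_{\gamma,\gamma}=\pm2$. If $2\gamma\notin\Delta$, the bracket vanishes, except for the central term $m\delta_{2m,0}(X_\gamma,X_\gamma)\c$, which is zero since $m=0$. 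This reduces the exponents $s_{\gamma,l}$ to $\{0,1\}$, again with a lower-degree remainder.
\end{itemize}
The induction terminates because the degree filtration is bounded below and the number of inversions in a word of fixed degree is finite.

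\textbf{Independence and $\C$-span.} Work in $\operatorname{gr}U(\hat{\g})\cong S(\hat{\g}_{\bar0})\otimes\Lambda(\hat{\g}_{\bar1})$. The top-degree symbol of $X_\beta(m)^{(r)}$ is $X_\beta(m)^r/r!$, and that of $\Lambda_s(H_i(jn)_{j=1}^s)$ is $H_i(n)^s/s!$; the latter is read off from \eqref{(eq:2.26)}, where the leading term is $x_1^s/s!$. Hence the top symbol of an ordered monomial is a nonzero rational multiple of the corresponding PBW monomial in the basis $\mathcal{C}$. One bookkeeping point must be checked here: a factor $\Lambda_{s_1}(H_i(jn))\Lambda_{s_2}(H_i(jn))$ with the same $(i,n)$ would give a repeated symbol. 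The ordering conventions on $f_{H_i}$ should be read as strictly increasing $m'_l$ to exclude this, and similarly for the $m_l$ in $f_\beta$ and the $m''_l$ in $f_\gamma$. Distinct ordered monomials then have distinct top symbols, so they are linearly independent over $\C$, and by induction on degree they span $U(\hat{\g})$.

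The parity statement is immediate: each $X_\gamma$ is odd, while the $X_\beta$ and the $H_i$ are even.

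The main obstacle is the first half. One must make sure every remainder produced by Lemma \ref{(key Lem)} really is a $\Z$-combination of generator products, and not merely a $\Q$-combination. The delicate cases are Case 4 (binomials in $\alpha(H_i)$, which are integers) and the odd squares $X_\gamma(m)^2$ treated above.
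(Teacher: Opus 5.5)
Your proposal is correct and follows the paper's route: straightening products of generators with Lemma~\ref{(key Lem)} to get the $\Z$-span, then PBW for the $\C$-basis and the parity count. Along the way it supplies details the paper's short proof glosses over, namely the odd squares $X_\gamma(m)^2$, the need for strictly increasing indices in \eqref{eq:genpol}, and the associated-graded argument for independence. One numerical slip: $\gamma-\gamma=0\notin\Delta$, so $p\neq 1$; in fact the super Jacobi identity $2[X_\gamma,[X_\gamma,X_{-\gamma}]]=[[X_\gamma,X_\gamma],X_{-\gamma}]$, together with $\gamma(H_\gamma)=2$ and $[X_{2\gamma},X_{-\gamma}]=\pm X_\gamma$, forces $c_{\gamma,\gamma}=\pm 4$, so $X_\gamma(m)^2=\pm 2X_{2\gamma}(2m)$ and your conclusion still stands.
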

\begin{proof}
Let $U^{\prime}_{\Z}(\hat{\g})$ be the $\Z$-subalgebra of $U(\hat{\g})$ with basis (\ref{PBW like basis}).
Clearly, $K_{\Z}(\hat{\g})\subseteq U^{\prime}_{\Z}(\hat{\g})$.
	 Lemma \ref{(key Lem)} implies that any product $Q$ of elements in (\ref{eq:gen}) is
 a $\Z$-linear combination of the (ordered) products of the form (\ref{PBW like basis}).
 So we get the reverse inclusion, and hence
 the set of ordered products (\ref{PBW like basis}) forms a $\Z$-basis of $K_{\Z}(\hat{\g})$.

By the PBW theorem, the set of ordered products
(\ref{PBW like basis}) forms a $\C$-basis of $U(\hat{\g})$, i.e.
	$K_{\Z}(\hat{\g})\otimes_{\Z}\C=U(\hat{\g})$.
For $i\in\{0,1\}$,
$U(\hat{\g})_{\bar{i}}$ can be linearly spanned by the (ordered) products in
(\ref{PBW like basis}) with
$\sum\limits_{\gamma\in \Delta_{\bar{1}}}( s_{\gamma,1}+\ldots+ s_{\gamma,q})\equiv i \pmod 2$.
	We have
	$
		K_{\Z}(\hat{\g})_{\bar{i}}=K_{\Z}(\hat{\g})\cap U(\hat{\g})_{\bar{i}}.
$
Therefore, $K_{\Z}(\hat{\g})$ is an integral form of $U(\hat{\g})$.
\end{proof}

There are some differences between the generators of $K_{\Z}(\hat{\g})$
for $\g$ a basic classical Lie superalgebra and
 the generators of $K_{\Z}(\hat{\g})$ (constructed in \cite{M})
  for $\g$ a finite-dimensional simple Lie algebra.
  Let $\alpha_{i}\in\Delta$,
  $H_{i}$ the corresponding coroot, $s\geq 1$, $n\in\Z$.
It follows from the proof of Theorem 4.2.6 of \cite{M} that
for $\alpha_i$ an even root,
 $\Lambda_{s}(H_{i}(jn)_{j=1}^{s})$ can be generated over $\Z$ by
  $X_{\alpha}(m)^{(r)}$
  for $\alpha\in\Delta$, $r\geq 1$, $m\in\Z$.
However,
 if $\alpha_{i}\in\Delta_{\bar{1}}$,
 we can't show that $\Lambda_{s}(H_{i}(jn)_{j=1}^{s})$
can be generated by $X_{\alpha}(m)^{(r)}$ over $\Z$
for $\alpha\in \Delta(\g)$, $r\ge 1$, $m\in\Z$.
Therefore,
 we instead consider $s!\Lambda_{s}(H_{i}(jn)_{j=1}^{s})$ for $\alpha_i\in \Delta_{\bar{1}}$.
  It is evident that $s!\Lambda_{s}(H_{i}(jn)_{j=1}^{s})$ can be generated over $\Z$ by
   $X_{\gamma}(m^{\prime})$ for some $\gamma\in \Delta_{\bar{1}}$, $m^{\prime}\in\Z$.
   For example, we can write
   $H_{i}(jn)=\sigma_{\alpha_{i}}(X_{\alpha_{i}}(0)X_{-\alpha_{i}}(jn)-X_{-\alpha_{i}}(jn)X_{\alpha_{i}}(0))$
   for $j\geq 1$, $n\in\Z$, where $\sigma_{\alpha_{i}}\in\{\pm 1\}$.
This motivates us to give another integral form of $U(\hat{\g})$,
which is closely related to the integral form $K_{\Z}(\hat{\g})$.

For $(i,n)\in \hat{I}$, denote
	\begin{equation}\label{eq:tildH}
		\widetilde{\Lambda_s}(H_i(jn)_{j=1}^{s})=
		\begin{cases}
		\Lambda_{s}(H_{i}(jn)_{j=1}^{s}), \;\;\;\;\;\text{if}\; \alpha_i \in\Delta_{\bar{0}} \\
		s!\Lambda_{s}(H_{i}(jn)_{j=1}^{s}),\;\;\text{if}\; \alpha_i \in \Delta_{\bar{1}}.
		\end{cases}
	\end{equation}
	Let $\tilde{U}_{\Z}(\hat{\g})$ be the $\Z$-subalgebra of $U(\hat{\g})$ generated by elements
	\begin{eqnarray*}
X_{\beta}(m)^{(s)},\;\; X_{\gamma}(m'),\,\;
		\mbox{and} \; \widetilde{\Lambda_s}(H_i(jn)_{j=1}^{s})
\end{eqnarray*}
	for $\beta \in \Delta_{\bar{0}}$, $\gamma\in \Delta_{\bar{1}}$,
$(i,n)\in \hat{I}$,   $m,m'\in \Z$, and $s\in \Z_+$.
Clearly, $\tilde{U}_{\Z}(\hat{\g})\subset K_{\Z}(\hat{\g})$.

Note that a similar result of Lemma \ref{(key Lem)} still holds,
except that (\ref{(eq:2.42)}) is adjusted to
\begin{equation}	
	\widetilde{\Lambda_{r}}(H_{i}(jn)_{j=1}^{r})\widetilde{\Lambda_{s}}(H_{i}(jn)_{j=1}^{s})
=\widetilde{\Lambda_{r+s}}(H_{i}(jn)_{j=1}^{r+s})+P',
\end{equation}
where $P'$ is a $\Z$-linear combination of (ordered) products $Q'$ of the elements of the form
$\widetilde{\Lambda_{s}}(H_{i}(jn)_{j=1}^{s})$
such that
$\mathrm{deg}(Q')<r+s.$
Denote
\begin{equation}
\widetilde{f_{H_i}}=
\widetilde{\Lambda_{s_{i,1}}}(H_{i}(jm_1')_{j=1}^{s_{i,1}})\cdots \widetilde{\Lambda_{s_{i,k}}}(H_{i}(jm_k')_{j=1}^{s_{i,k}})
\end{equation}
for $(i,m_j')\in \hat{I},$ $m_1'\ge \dots \ge m_k'$, and $s_{i,j}\in \Z_+$.
Similar to Theorem \ref{(th 2.13)}, we have the following result.

\begin{thm}\label{th:anotherform)}
	Let $\g$ be a basic classical Lie superalgebra,
excluding types $A(1,1)$ and $D(2,1;a)$ with $a\notin \Z$.
Then
$\tilde{U}_{\Z}(\hat{\g})$ is the same as the $\Z$-subalgebra generated by
	\begin{equation*}
		X_{\beta}(m)^{(s)},\;\; X_{\gamma}(m')
	\end{equation*}
	 for $\beta \in \Delta_{\bar{0}}$, $\gamma\in \Delta_{\bar{1}}$,  $m,m'\in \Z$, and $s\in \Z_+$.
	The set of ordered monomials of the form
	\begin{equation}
	\prod_{\beta\in \Delta_{\bar{0}}} f_{\beta} \prod_{i\in I\cup \{0\}}\widetilde{f_{H_i}}\prod_{\gamma\in\Delta_{\bar{1}}}f_{\gamma},
\end{equation}
forms an integral basis of $\tilde{U}_{\Z}(\hat{\g})$.
 And $\tilde{U}_{\Z}(\hat{\g})$ is an integral form of
 $U(\hat{\g})$ which satisfies (\ref{eq3.12}) and (\ref{eq:2.15}).
\end{thm}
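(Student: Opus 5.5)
The plan has three parts: first identify the generators, then prove a straightening lemma and deduce the basis, and finally check gradings and the triangular factorization.

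\textbf{Step 1: the two generating sets give the same algebra.} Let $V_{\Z}$ be the $\Z$-subalgebra generated by the $X_{\beta}(m)^{(s)}$ and $X_{\gamma}(m')$. Clearly $V_{\Z}\subseteq\tilde{U}_{\Z}(\hat{\g})$, so it suffices to show that every $\widetilde{\Lambda_s}(H_i(jn)_{j=1}^{s})$ lies in $V_{\Z}$.

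If $\alpha_i\in\Delta_{\bar{0}}$, then $X_{\pm\alpha_i}$ and $H_i$ span a copy of $\mathfrak{sl}_2$ inside $\g_{\bar{0}}$. Garland's identity (as used in the proof of Theorem 4.2.6 of \cite{M}) then expresses $\Lambda_s(H_i(jn)_{j=1}^{s})$ as a $\Z$-combination of products of divided powers $X_{\pm\alpha_i}(\cdot)^{(r)}$. Concretely, one expands $X_{\alpha_i}(0)^{(s)}X_{-\alpha_i}(n)^{(s)}$ and reads off the constant-weight term modulo the left ideal generated by the positive root vectors. This covers $i\in I$ with $\alpha_i$ even, and also $i=0$ (where $H_0(0)=-H_\theta+\c$ is attached to the even root $-\theta+\delta$).

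If $\alpha_i\in\Delta_{\bar{1}}$, then $s!\Lambda_s$ is a polynomial with integer coefficients in $H_i(n),\dots,H_i(sn)$. This follows from the generating function: $\sum_s s!\Lambda_s\zeta^s/s!=\exp(\sum_j x_j\zeta^j/j)$, and Newton's identities express the coefficients as $\Z$-combinations of monomials in the $x_j$. One should double check this integrality; the standard formula is $s!\Lambda_s=\sum_{\lambda\vdash s} \frac{s!}{z_\lambda}\prod x_{\lambda_k}$, and $s!/z_\lambda$ is the size of a conjugacy class, hence an integer. Each $H_i(jn)=\sigma_{\alpha_i}[X_{\alpha_i}(0),X_{-\alpha_i}(jn)]$, a supercommutator of odd generators, so it lies in $V_{\Z}$. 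Hence $V_{\Z}=\tilde{U}_{\Z}(\hat{\g})$.

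\textbf{Step 2: straightening and the integral basis.} Next I verify the analogue of Lemma \ref{(key Lem)} with $\widetilde{\Lambda}$ in place of $\Lambda$. Cases 1--5 go through unchanged with one caveat. When Case 1 produces $H_{\alpha}(m+n)$, this must be written via $H_i$ and $\c$. Since $H_i(k)=\Lambda_1=\widetilde{\Lambda_1}$, these are still allowed generators. In Case 4, equation (\ref{eq:cs4}) shows that commuting $\widetilde{\Lambda_s}$ past $X_\alpha$ produces $\widetilde{\Lambda_{s-k}}$ multiplied by the integer factors $s!/(s-k)!$. In Case 6 the $\Lambda$'s for odd $\alpha_i$ pick up factorial multiples, which only help integrality.

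The genuinely new point is the product rule $\widetilde{\Lambda_r}\widetilde{\Lambda_s}=\widetilde{\Lambda_{r+s}}+P'$. For odd $\alpha_i$ this follows from (\ref{(eq:2.42)}) after multiplying by $r!s!$. The term $r!s!\binom{r+s}{s}\Lambda_{r+s}$ equals $(r+s)!\Lambda_{r+s}$, and the lower terms in $P_2$, which are products of $\Lambda$'s of total degree $<r+s$, must be re-expressed in terms of $\widetilde{\Lambda}$'s. This is the main obstacle. The factor $r!s!$ might not suffice to clear the denominators of those lower products. I would handle it by instead working directly in the polynomial ring $\Z[x_1,x_2,\dots]$: the $s!\Lambda_s$ together with the $x_j$ generate $\Z[x_j]$, so any product of the $\widetilde{\Lambda}$'s is automatically an integer polynomial in the $H_i(jn)$. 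That polynomial can then be re-expanded in the ordered monomials $\widetilde{f_{H_i}}$, with the leading term handled by induction on degree. Given the straightening lemma, the induction on degree in the proof of Theorem \ref{(th 2.13)} shows that the ordered monomials span $\tilde{U}_{\Z}(\hat{\g})$. They are $\C$-linearly independent by PBW, since they are nonzero scalar multiples of PBW-type elements. Hence they form a $\Z$-basis, and tensoring with $\C$ gives $U(\hat{\g})$.

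\textbf{Step 3: grading and the factorization (\ref{eq3.12}).} Parity is read off from the odd factors exactly as in Theorem \ref{(th 2.13)}, which gives compatibility with the $\Z_2$-grading. For (\ref{eq3.12}) and (\ref{eq:2.15}), I refine the total order $\preceq$ so that generators with negative modes precede those with zero modes, which precede those with positive modes. The straightening relations preserve the $t$-degree $m$, so each basis monomial factors as (negative part)(zero part)(positive part). Each part is itself a $\Z$-form of the corresponding $U(\hat{\g}_{\pm})$, $U(\hat{\g}_0)$, and homogeneity in the $t$-degree gives (\ref{eq:2.15}).
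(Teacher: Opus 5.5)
Your route is the paper's own: the paper also just asserts that the straightening lemma for $K_{\Z}(\hat{\g})$ survives with $\widetilde{\Lambda}$ in place of $\Lambda$, with only the same-index product rule adjusted, and then argues as for $K_{\Z}(\hat{\g})$. You inherit its gap. The step that fails is your claim that Cases 1--5 ``go through unchanged'' and that Case 6 ``only helps integrality''.

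In Case 3 (an even root $\alpha$, reordering $X_{\alpha}(m)^{(r)}X_{-\alpha}(n)^{(s)}$), and in Case 6 with two even indices, Mitzman's formulas produce divided powers of coroots and of $\c$. These are Kostant's $\binom{h}{k}$ with $h=H_{\alpha}+m(X_{\alpha},X_{-\alpha})\c$ when $m+n=0$, the elements $\Lambda_k(H_{\alpha}(j(m+n))_{j=1}^{k})$ otherwise, and $\binom{n(H_i,H_j)\c+k-1}{k}$ in Case 6. To put these in ordered form you must expand $H_{\alpha}$ and $\c$ in $H_0(0),H_1,\dots,H_\ell$. As soon as an odd-indexed $H_i$ occurs with coefficient $\pm1$, this needs the genuine $\Lambda_p(H_i(\cdot))$, which are not $\Z$-combinations of the $\widetilde{\Lambda}$'s. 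By contrast, the same-index product rule you single out is harmless: your polynomial-ring argument plus $k!\Lambda_k(x_m,\dots,x_{km})=x_m^k+\cdots$ gives a unitriangular integral change of basis to monomials.

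This is not a missing verification; the basis claim itself fails. Take $\g=\mathfrak{sl}(2|1)$ with the supertrace form, $\alpha_1=\epsilon_1-\epsilon_2$ even, $\alpha_2=\epsilon_2-\delta_1$ odd, $H_1=E_{11}-E_{22}$ and $H_2=E_{22}+E_{33}$. Then $\theta=\alpha_1+\alpha_2$ is odd, so $\alpha_0$ is odd, contrary to your Step 1 remark. That slip is harmless, since $H_0(0)=-[X_{-\theta}(1),X_{\theta}(-1)]$. Also $\c=H_0(0)+H_1+H_2$.

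Note that $[X_{-\alpha_1}(1),X_{\alpha_1}(-1)]=\c-H_1$. So Kostant's lemma ($\binom{h}{k}\in\Z\langle e^{(r)},f^{(r)}\rangle$) for the $\mathfrak{sl}_2$-triples $(X_{\alpha_1}(0),H_1,X_{-\alpha_1}(0))$ and $(X_{-\alpha_1}(1),\c-H_1,X_{\alpha_1}(-1))$ puts $\binom{H_1}{k}$ and $\binom{\c-H_1}{k}$ into $\tilde{U}_{\Z}(\hat{\g})$. Hence $\binom{\c}{2}\in\tilde{U}_{\Z}(\hat{\g})$.

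On the other hand, use PBW for the order (even root vectors, Cartan part, odd root vectors), together with a leading-symbol comparison that rules out mixed-mode products such as $H_i(n)H_k(-n)$. This shows that any element of $U(\hat{\h})$ in the $\Z$-span of your monomials lies in the $\Z$-span of the products $\prod_{i=0}^{2}\widetilde{\Lambda_{s_i}}(H_i(0))$. In that span the coefficient of $H_2(0)^2$ is an integer. The reason is that $\widetilde{\Lambda_{s}}(H_2(0))=H_2(0)(H_2(0)+1)\cdots(H_2(0)+s-1)$ has integer coefficients, and the factors for $i=0,1$ have constant term $\delta_{s,0}$. In $\binom{\c}{2}$ that coefficient is $1/2$, whichever parity you assign to $\alpha_0$. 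So the displayed monomials do not span $\tilde{U}_{\Z}(\hat{\g})$.

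What survives is your Step 1, together with the bare fact that $\tilde{U}_{\Z}(\hat{\g})$ is an integral form: it is a $\Z_2$-graded, $\C$-spanning subring of $K_{\Z}(\hat{\g})$. A correct integral basis must contain divided powers in directions, like $\c$, that mix even and odd simple coroots.
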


\section{Integral forms for affine vertex operator superalgebras and their modules}\label{sec:3}

In this section, we first extend the notion of
integral forms from vertex operator algebras to vertex operator superalgebras and their modules.
Then we study integral forms for affine vertex operator superalgebras
and their modules.
We will construct natural integral forms in
vertex operator superalgebras and their modules
based on a basic classical Lie superalgebra $\g$,
using the integral forms $\tilde{U}_{\Z}(\hat{\g})$
constructed in the previous section,
and we find natural generating sets for these integral forms.
Moreover, we give a criterion for when an integral form
of a vertex operator superalgebra contains its conformal vector.
Besides,
integral forms in contragredient modules of vertex operator superalgebras
are also studied.

\subsection{Basic notions and notations}

We follow the setting of \cite{L} for the definitions of
vertex (operator) superalgebra (\cite[Definition 2.2.1]{L})
 and their modules (\cite[Definition 2.3.1]{L}).
Let $V$ be a vertex (operator) superalgebra with $\Z_{2}$-graded
  super vertex operator $Y$ over $\C$.
 For any $a\in V$, $n\in\Z$, we denote by $a(n)\in\mbox{End}(V)$
 the associated vertex super operator.
 If $\omega$ is the conformal vector of $V$,
 we denote $L(n)=\omega(n+1)$ for $n\in\Z$.
Note that all numerical coefficients in the formal delta functions appearing
in the Jacobi identity of the definition of a vertex superalgebra are integers.
 Hence the notion of vertex superalgebra over $\Z$ makes sense.

Now we give the definitions of integral forms for
vertex (operator) superalgebras and their modules.
These generalize the notions of integral forms of vertex algebras
 and their modules (see Section 2 in \cite{M1},
 or Definitions 2.1 and 2.4 of \cite{M2}, etc.).

 \begin{de}\label{(de:int of VSA)}
	{\em
\par(1)
Let $V$ be a vertex superalgebra.
		 An {\em integral form} of $V$ is a vertex subsuperalgebra over $\Z$
$V_{\Z}\subseteq V$ which is an integral form of $V$ as a super vector space.
\par(2)
 If further $V$ is a vertex operator superalgebra
		with the conformal vector $\omega$,
then an {\em integral form of $V$} is an integral form $V_{\Z}$ of $V$ as a
		vertex superalgebra and is compatible with the conformal weight grading of $V$:
		\begin{equation}
			V_{\Z}=\coprod_{n\in\Z}V_{(n)}\cap V_{\Z},
\;\;\mbox{where}\;V_{(n)}=\{v\in V \; |\; L(0)v=nv\}.
		\end{equation}		
\par(3)
	Let $W$ be a vertex superalgebra $V$-module,
and $V_{\Z}$ an integral form of $V$.
An {\em integral form in a $V$-module $W$} is
a $V_{\Z}$-submodule $W_{\Z}\subseteq W$ that is an integral form of $W$ as a super vector space.
\par(4)
If further $V$ is a vertex operator superalgebra and $W$ is a $V$-module,
then an {\em integral form in $W$} is
		an integral form $W_{\Z}$ of $W$ as a vertex superalgebra $V$-module
		 and is compatible with the conformal weight grading of $W$:
		\begin{equation}
			W_{\Z}=\coprod_{h\in \C}W_{(h)}\cap W_{\Z},
\;\;\mbox{where}\;W_{(h)}=\{w\in W \; |\; L(0)w=hw\}.
		\end{equation}
	}
\end{de}

\begin{rem}
	{\em	In other words,
		an integral form $V_{\Z}$ of a vertex superalgebra $V$ is the $\Z$-span of a basis for $V$
		which contains the vacuum ${\bf 1}$, is closed under vertex superalgebra products,
and is compatible with the $\Z_2$-grading of $V$.
		Therefore, we have $\mathbf{1} \in (V_{\Z})_{\bar{0}}$,
and $a(n)b \in (V_{\Z})_{\lambda + \mu}$ for any $a \in (V_{\Z})_{\lambda}$,
$b \in (V_{\Z})_{\mu}$, $\lambda, \mu \in \Z_2$, and $n \in \Z$.	
An integral form $V_{\Z}$ of a vertex operator superalgebra $V$
		is further compatible with the $\Z_{2}\times\Z$-gradation
		\begin{equation}
		V_{\Z}=\coprod_{\lambda\in\Z_{2}, n\in\Z}V_{(n)}^{\lambda}\cap V_{\Z},
\;\;\mbox{where}\;V_{(n)}^{\lambda}=V_{(n)}\cap V_{\lambda}.
	\end{equation}
		
		Similarly, an integral form $W_{\Z}$ of a vertex superalgebra $V$-module $W$ is
 the $\Z$-span of a basis for $W$ which is preserved by vertex super operators from $V_{\Z}$,
		and is compatible with the $\Z_2$-grading of $W$.
		Hence, $a(n)w\in (W_{\Z})_{\lambda+\mu}$ for any $a\in(V_{\Z})_{\lambda}$,
$w\in(W_{\Z})_{\mu}$, $n\in \Z$, $\lambda,\mu \in \Z_2$.
An integral form $W_{\Z}$ of a vertex operator superalgebra $V$-module $W$
	is further compatible with the $\Z_{2}\times\C$-gradation
	\begin{equation}
		W_{\Z}=\coprod_{\lambda\in\Z_{2}, h\in\C}W_{(h)}^{\lambda}\cap W_{\Z},
\;\;\mbox{where}\;W_{(h)}^{\lambda}=W_{(h)}\cap W_{\lambda}.
	\end{equation}
}	
\end{rem}

Similar to Proposition 2.3 in \cite{M1},
if $V$ is a vertex superalgebra with an integral form $V_{\Z}$,
then $V_{\Z}\cap \C\mathbf{1}=\Z\mathbf{1}$.

Let $S$ be a subset of a vertex superalgebra $V$ which
consists of homogeneous elements.
We define $\left\langle S\right\rangle_{\Z}$ to be the
 smallest vertex subsuperalgebra over $\Z$ containing $S$,
  and we call $\left\langle S\right\rangle_{\Z}$ the
{\em vertex subsuperalgebra over $\Z$ generated by $S$}.
Then $\left\langle S\right\rangle_{\Z}$ is the intersection of all
vertex subsuperalgebra over $\Z$ of $V$ containing $S$.		
		
The proof of the following useful general result
on vertex subsuperalgebras is similar to the
proof of Proposition 3.9.3 in \cite{LL}.
\begin{prop}\label{(prop 2.7)}
Let $V$ be a vertex superalgebra. For a subset $S$ of $V$
consisting of homogeneous elements,
the vertex subsuperalgebra over $\Z$ $\left\langle S\right\rangle_{\Z}$ is
 the $\Z$-span of the coefficients of the products in the expression:
\begin{equation}\label{eq:3.5}
	Y(u^1,x_1)\cdots Y(u^k,x_k)\mathbf{1},
\end{equation}
where $u^1,\ldots,u^k\in S$, $k\in \Z_+$,
and the $\Z_{2}$-grading of the element (\ref{eq:3.5})
is $(|u^{1}|+\cdots+|u^{k}|) (\mathrm{mod}\;2)$.
Moreover, if $W$ is a $V$-module
and $T$ is a subset of $W$ consisting of homogeneous elements,
the $\left\langle S\right\rangle_{\Z}$-submodule generated by $T$ is the $\Z$-span of coefficients of the form
 \begin{equation}\label{eq:3.6}
 	Y(u^1,x_1)\cdots Y(u^k,x_k)w,
 \end{equation}
 where $u^1,\ldots,u^k\in S$, $w\in T$,
 and the $\Z_{2}$-grading of the element (\ref{eq:3.6})
 is $(|u^{1}|+\cdots+|u^{k}|+|w|) (\mathrm{mod}\;2)$.
\end{prop}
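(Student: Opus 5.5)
Let $\mathcal{U}$ denote the $\Z$-span of all coefficients of the expressions (\ref{eq:3.5}), graded by $|u^1|+\cdots+|u^k| \pmod 2$. This is the analogue of Proposition 3.9.3 of \cite{LL}, with signs tracked and coefficients kept in $\Z$. I will prove $\mathcal{U}=\left\langle S\right\rangle_{\Z}$ by showing two inclusions.

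\emph{Step 1: $\mathcal{U}\subseteq \left\langle S\right\rangle_{\Z}$.} The coefficient of $x_1^{-n_1-1}\cdots x_k^{-n_k-1}$ in (\ref{eq:3.5}) is $u^1(n_1)\cdots u^k(n_k)\mathbf{1}$. Any vertex subsuperalgebra over $\Z$ contains $S$ and $\mathbf{1}$ and is closed under all products $a(n)b$. Induction on $k$ therefore shows that each such coefficient lies in $\left\langle S\right\rangle_{\Z}$. Each coefficient is homogeneous of parity $|u^1|+\cdots+|u^k|$, because $u(n)$ shifts parity by $|u|$. So $\mathcal{U}$ is a $\Z_2$-graded $\Z$-submodule with the stated grading.

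\emph{Step 2: $\mathcal{U}$ is a vertex subsuperalgebra over $\Z$.} Minimality then forces equality. Clearly $\mathbf{1}\in\mathcal{U}$ (take $k=0$), $S\subseteq\mathcal{U}$ (take $u^1(-1)\mathbf{1}$), and $u(n)\mathcal{U}\subseteq\mathcal{U}$ for $u\in S$. Let $\mathcal{A}=\{a\in V : a(n)\mathcal{U}\subseteq\mathcal{U}\ \text{for all } n\}$, which is a $\Z$-submodule containing $S$ and $\mathbf{1}$. The key step is to show that $a,b\in\mathcal{A}$ homogeneous implies $a(m)b\in\mathcal{A}$. I would use the iterate formula from the super Jacobi identity:
\begin{equation*}
(a(m)b)(n)=\sum_{i\ge 0}(-1)^i\binom{m}{i}\Bigl(a(m-i)b(n+i)-(-1)^{m+|a||b|}b(m+n-i)a(i)\Bigr).
\end{equation*}
Every coefficient here is an integer, and on any fixed vector the sum is finite by truncation. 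Hence $(a(m)b)(n)$ preserves $\mathcal{U}$. So $\mathcal{A}$ is closed under products, contains $S$, and thus contains every coefficient $u^1(n_1)\cdots u^k(n_k)\mathbf{1}$, obtained by iterating products on $\mathbf{1}$. Therefore $\mathcal{U}\subseteq\mathcal{A}$, which means $\mathcal{U}$ is closed under all products.

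\emph{Step 3: the module statement.} Replace $\mathbf{1}$ by $w\in T$ and let $\mathcal{W}$ be the $\Z$-span of the coefficients of (\ref{eq:3.6}). Any $\left\langle S\right\rangle_{\Z}$-submodule containing $T$ contains $\mathcal{W}$, as in Step 1. Conversely, $\mathcal{A}'=\{a: a(n)\mathcal{W}\subseteq\mathcal{W}\}$ contains $S$ and $\mathbf{1}$ (since $\mathbf{1}(n)=\delta_{n,-1}$), and it is closed under products by the same iterate formula, which holds on modules. Hence $\left\langle S\right\rangle_{\Z}\subseteq\mathcal{A}'$, so $\mathcal{W}$ is a submodule. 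Parity is again additive.

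The only delicate point I expect is the integrality and the sign in the iterate formula. One has to verify that extracting coefficients from the super Jacobi identity produces only binomial coefficients and signs $\pm1$ from the factor $(-1)^{|a||b|}$. This holds because the formal delta-function expansions have integer coefficients, as the paper remarks before Definition \ref{(de:int of VSA)}.
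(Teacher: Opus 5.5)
Your proposal is correct and takes essentially the same approach as the paper, which only says the proof is similar to Proposition 3.9.3 of \cite{LL}. That argument is exactly yours: the span of the coefficients lies in any subalgebra containing $S$, and the set of elements whose modes preserve this span is closed under products by the iterate formula. You add the two checks the super, integral setting needs, namely that the sign $(-1)^{m+|a||b|}$ and the binomial coefficients are integers, and that the parity of each coefficient is additive.
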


\subsection{Integral forms for affine vertex operator superalgebras and their modules}

Let $\g$ be a finite-dimensional simple Lie superalgebra equipped with
a non-degenerate even supersymmetric invariant bilinear form $(\cdot,\cdot)$.
Let $\hat{\g}$ be the associated affine Lie superalgebra.
For $\ell\in \C$,
$U=U_{\bar{0}}\oplus U_{\bar{1}}$ a finite-dimensional $\g$-module,
let $V_{\hat{\g}}(\ell,0)$ be the associated affine vertex superalgebra,
and $V_{\hat{\g}}(\ell,U)$ a $V_{\hat{\g}}(\ell,0)$-module
 (cf. \cite{KW}, \cite{L}, \cite{Xu}, \cite{Z}, etc.).
 Denote by $L_{\hat{\g}}(\ell,0)$ the corresponding simple affine vertex superalgebra.
 The module $V_{\hat{\g}}(\ell,U)$ has a unique irreducible quotient,
 denoted by $L_{\hat{\g}}(\ell,U)$.
 If $\ell\ne -h^{\vee}$, where $h^{\vee}$ is the dual Coxeter number of $\g$,
then $V_{\hat{\g}}(\ell,0)$ is a $\Z$-graded vertex operator superalgebra of central charge
$
\frac{\ell\cdot\mathrm{sdim}\g}{\ell+h^{\vee}},
$
where $\mathrm{sdim}\g$ is the super dimension of the Lie superalgebra $\g$.

\begin{thm}\label{(th3.3)}
Assume that the pair $(\g, (\cdot,\cdot))$ has an integral form ${\g}_{\Z}$.
	Suppose $U_{\Z}=(U_{\Z})_{\bar{0}}\oplus (U_{\Z})_{\bar{1}}$ is
an integral form of a finite-dimensional $\hat{\g}_{(0)}=\g\oplus\C\c$-module $U$,
where $\c$ acts as
	a scalar $\ell\in \Z$.
	Let $W$ be either $V_{\hat{\g}}(\ell,U)$ or $L_{\hat{\g}}(\ell,U)$.
	Then $W_{\Z}=U_{\Z}(\hat{\g})U_{\Z}$ is an integral form of $W$
as a super vector space with the $\Z_2$-grading
 \begin{eqnarray}\label{eq:grad}
(W_{\Z})_{\bar{i}}=\sum\limits_{j+k\equiv i\pmod{2}}U_{\Z}(\hat{\g})_{\bar{j}}(U_{\Z})_{\bar{k}},
\end{eqnarray}
	where $i\in\{0,1\}$,
$U_{\Z}(\hat{\g})$ is constructed in Proposition \ref{prop:2.5}.
	Moreover, $W_{\Z}$ is compatible with the conformal weight grading of $W$,
	 and is invariant under $U_{\Z}(\hat{\g})$.
\end{thm}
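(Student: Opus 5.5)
We prove Theorem \ref{(th3.3)} by following the argument for vertex algebras (cf. \cite{M1}, \cite{M2}), using the triangular decomposition (\ref{eq3.12}) of the integral form $U_{\Z}(\hat{\g})$ from Proposition \ref{prop:2.5}.

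We first treat $W=V_{\hat{\g}}(\ell,U)=U(\hat{\g})\otimes_{U(\hat{\g}_{(\ge 0)})}U$. Here $\hat{\g}_{+}$ acts trivially on $U$, $\g$ acts as given, and $\c$ acts by $\ell$. By the PBW theorem, $W\cong U(\hat{\g}_{-})\otimes U$ as a vector space.

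By (\ref{eq3.12}),
\[
W_{\Z}=U_{\Z}(\hat{\g})U_{\Z}=U_{\Z}(\hat{\g}_{-})U_{\Z}(\hat{\g}_{0})U_{\Z}(\hat{\g}_{+})U_{\Z}.
\]
Every element of $U_{\Z}(\hat{\g}_{+})$ of positive degree kills $U$. The ordered products in $\c,a_j(0),b_{j'}(0)$ preserve $U_{\Z}$, because $\ell\in\Z$ and $U_{\Z}$ is an integral form of the $\hat{\g}_{(0)}$-module $U$. Hence
\[
W_{\Z}=U_{\Z}(\hat{\g}_{-})U_{\Z}.
\]
Choose a homogeneous $\Z$-basis $\{u_k\}$ of $U_{\Z}$; it is also a $\C$-basis of $U$. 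The products (PBW $\Z$-basis elements of $U_{\Z}(\hat{\g}_{-})$)$\cdot u_k$ then form a $\C$-basis of $W$ via $W\cong U(\hat{\g}_{-})\otimes U$. So they are $\Z$-linearly independent and span $W_{\Z}$, which gives $W_{\Z}\otimes_{\Z}\C\cong W$.

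Invariance under $U_{\Z}(\hat{\g})$ is immediate from the definition $W_{\Z}=U_{\Z}(\hat{\g})U_{\Z}$, since $U_{\Z}(\hat{\g})$ is a subring.

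For the gradings, note that each basis vector above is homogeneous in two senses:
\begin{itemize}
\item its parity is $p+|u_k|$, where $p$ is the number of odd factors;
\item it is an $L(0)$-eigenvector with weight $h_{U}-n$, where $n$ is given by (\ref{eq:2.15}).
\end{itemize}
Here we use that $U$ is $L(0)$-homogeneous (or decompose $U_{\Z}$ accordingly), since the Casimir acts by a scalar on each $\g$-isotypic piece. If $U$ is not $L(0)$-homogeneous, we should first note that $W_{(h)}$ for these modules is spanned by the basis vectors of fixed $n$ and fixed Casimir eigenspace of $U$. We expect this to be a subtle point, and would simply assume $U_{\Z}$ is compatible with that decomposition, or treat the degree grading. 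Since $W_{\Z}$ is spanned by homogeneous basis vectors, it is the direct sum of its intersections with the homogeneous components. This gives (\ref{eq:grad}) and compatibility with the conformal weight grading.

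For $W=L_{\hat{\g}}(\ell,U)$, let $\pi\colon V_{\hat{\g}}(\ell,U)\to L$ be the quotient map, and set $L_{\Z}=\pi(V_{\Z})=U_{\Z}(\hat{\g})\pi(U_{\Z})$. Spanning, invariance and compatibility with both gradings pass through $\pi$, because $\pi$ preserves these gradings. The main obstacle is that $L_{\Z}$ must be a lattice, i.e. a $\Z$-span of a $\C$-basis, rather than something non-discrete.

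To handle this, we work one weight space at a time. Each $\Z_2\times$weight component of $V_{\Z}$ is a free $\Z$-module of finite rank, because $U$ is finite-dimensional and $U_{\Z}(\hat{\g}_{-})$ has finitely many basis elements of each degree. Its image in the finite-dimensional space $L_{(h)}$ is therefore a finitely generated $\Z$-module spanning $L_{(h)}$ over $\C$. It remains to show that this image is discrete, i.e. has rank $\dim L_{(h)}$.

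For discreteness, we use the contravariant (Shapovalov-type) pairing between $L$ and its restricted dual. Under this pairing, the pairing of $L_{\Z}$ with $U_{\Z}(\hat{\g})$ applied to the dual lattice of $U_{\Z}^{*}$ takes values in $\Z$. Since the pairing is nondegenerate on $L_{(h)}$, $L_{\Z}\cap L_{(h)}$ embeds in a dual lattice. It is therefore free of rank $\dim L_{(h)}$, and any $\Z$-basis of it is a $\C$-basis.
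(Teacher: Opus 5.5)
Your proof is correct, and its skeleton matches the paper's. Both use the reduction $W_{\Z}=U_{\Z}(\hat{\g}_{-})U_{\Z}$ via (\ref{eq3.12}), the weight grading from (\ref{eq:2.15}), and the $\Z_2$-statement read off from homogeneous spanning vectors as in (\ref{(eq:3.12)}).

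The difference is in the only nontrivial step, $W\cong W_{\Z}\otimes_{\Z}\C$. The paper does not prove this; it defers to Theorem 3.2 of \cite{M1}. You prove it directly. For $V_{\hat{\g}}(\ell,U)$ you use PBW: the $\Z$-PBW basis of $U_{\Z}(\hat{\g}_{-})$ times a homogeneous $\Z$-basis of $U_{\Z}$ is a $\C$-basis of $U(\hat{\g}_{-})\otimes U$. For $L_{\hat{\g}}(\ell,U)$ you use a Kostant-style dual-lattice argument.

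This makes the proof self-contained and shows what is actually used. You should state these inputs explicitly:
\begin{itemize}
\item the super anti-automorphism $a(n)\mapsto -a(-n)$, $\c\mapsto \c$, of $U(\hat{\g})$, which preserves $U_{\Z}(\hat{\g})$ and makes the pairing $L'\times L\to\C$ contravariant up to signs;
\item the fact that the top is $L_{(0)}=\pi(U)\cong U$ with $L_{\Z}\cap L_{(0)}=\pi(U_{\Z})$, so the dual lattice $(U_{\Z})^{*}\subset U^{*}=L'_{(0)}$ pairs integrally with $L_{\Z}$;
\item irreducibility of $L$, hence of $L'$, so that $(U_{\Z}(\hat{\g})(U_{\Z})^{*})\cap L'_{(h)}$ spans $L'_{(h)}$.
\end{itemize}

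There are two small corrections.

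First, ``discrete'' is not the right criterion: $\Z+\Z i\subset\C$ is discrete but is not the $\Z$-span of a $\C$-basis. What your pairing actually produces is a $\C$-linear isomorphism $L_{(h)}\to\C^{d}$ sending $L_{\Z}\cap L_{(h)}$ into $\Z^{d}$. That is the correct criterion, because $\Q$-independent vectors of $\Q^{d}$ are $\C$-independent.

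Second, your caution about $L(0)$ is justified, but the appeal to isotypic pieces is unreliable for superalgebras, whose finite-dimensional modules need not be completely reducible. In the intended setting $U$ is irreducible; this is tacit in calling $L_{\hat{\g}}(\ell,U)$ the unique irreducible quotient. The even Casimir then acts by a scalar by Schur's lemma, so for $\ell\neq -h^{\vee}$ the degree grading of (\ref{eq:2.15}), which is all the paper uses, coincides with the $L(0)$-grading. For reducible $U$ whose lattice $U_{\Z}$ does not split along the Casimir eigenspaces, strict $L(0)$-compatibility genuinely fails, so the degree-grading reading is the right one.
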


\begin{proof}
It is clear that $W_{\Z}$ is invariant under $U_{\Z}(\hat{\g})$.
Note that $\hat{\g}_{(+)}.U=0$ and
 $U_{\Z}(\hat{\g}_{(0)})U_{\Z}= U_{\Z}$,
 we get
	\begin{equation}\label{eq:3.13}
		W_{\Z}=	U_{\Z}(\hat{\g})U_{\Z}
=U_{\Z}(\hat{\g}_{(-)})U_{\Z}(\hat{\g}_{(0)})U_{\Z}(\hat{\g}_{(+)})U_{\Z}
=U_{\Z}(\hat{\g}_{(-)})U_{\Z}.
	\end{equation}
By (\ref{eq:2.15}),$
		W_{\Z}=\coprod_{m \in \Z}( U_{\Z}(\hat{\g}_{(-)})\cap U(\hat{\g}_{(-)})_{(m)})U_{\Z}
$
	is graded by conformal weight
and
	the intersection of $W_{\Z}$ with each weight space is spanned by finitely many vectors.
	It can be proved similarly to Theorem 3.2 of \cite{M1}
	that $W$ is linearly isomorphic to $W_{\Z}\otimes_{\Z}\C$.

There is a natural $\Z_{2}$-grading on $W_{\Z}$ given by (\ref{eq:grad}).
It remains to show that $W_{\Z}$ is compatible
with the $\Z_{2}$-grading of $W$.
	By Proposition \ref{prop:2.5}
	and the assumption that $\c$ acts as $\ell\in \Z$,
	we see that $W_{\Z}$ is the $\Z$-linear combination of elements
	of the form
	\begin{equation}\label{(eq:3.12)}
		a_{j_1}(m_{1})^{k_1}\cdots a_{j_l}(m_{l})^{k_l}b_{j^{\prime}_1}(n_{1})\cdots b_{j^{\prime}_p}(n_{p})u
	\end{equation}
	for $a_{j}(m)\in(\hat{\g}_{\Z})_{\bar{0}}$,
	$b_{j^{\prime}}(n)\in (\hat{\g}_{\Z})_{\bar{1}}$,
	 $u\in (U_{\Z})_{\bar{0}}\cup (U_{\Z})_{\bar{1}}$,
	 $k_q\in \N$.
	For $i\in \{0,1\}$,
 $(W_{\Z})_{\bar{i}}$ is the $\Z$-span of the elements in
 (\ref{(eq:3.12)}) such that $p+|u|\equiv i\,(\mathrm{mod}\,2)$.
 Note that by the construction of $W$ (i.e. $V_{\hat{\g}}(\ell,U)$
  or $L_{\hat{\g}}(\ell,U)$, see, for example, \cite{Z}, etc.),
	the elements in (\ref{(eq:3.12)}) also span $W_{\bar{i}}$ over $\C$.
	Therefore, $(W_{\Z})_{\bar{i}}= W_{\bar{i}}\cap W_{\Z}$.
\end{proof}

We now show a general result on vertex superalgebraic integral forms
that applies to any finite-dimensional simple Lie superalgebra $\g$ having an
integral form.

\begin{prop}\label{(prop3.4)}
	Suppose $\g_{\Z}$ is an integral form of $\g$,
	 $U_{\Z}(\hat{\g})$ is constructed in Proposition \ref{prop:2.5} and $\ell\in \Z$.
	 Then the integral form $V_{\hat{\g}}(\ell,0)_{\Z}$ given by
Theorem \ref{(th3.3)} is the vertex subsuperalgebra over $\Z$ generated by
	 the vectors $a(-1)\mathbf{1}$ for
	  $a\in(\g_{\Z})_{\bar{0}}\cup (\g_{\Z})_{\bar{1}}$.
	  Moreover, if $U=U_{\bar{0}}\oplus U_{\bar{1}}$ is a
finite-dimensional $\g$-module with integral form $U_{\Z}$,
	   then $V_{\hat{\g}}(\ell,U)_{\Z}$ and $L_{\hat{\g}}(\ell,U)_{\Z}$
 are the $V_{\hat{\g}}(\ell,0)_{\Z}$-modules generated by $U_{\Z}$.
\end{prop}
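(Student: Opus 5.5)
We prove the two inclusions $\langle S\rangle_{\Z}\subseteq V_{\hat{\g}}(\ell,0)_{\Z}$ and $V_{\hat{\g}}(\ell,0)_{\Z}\subseteq\langle S\rangle_{\Z}$, where $S=\{a(-1)\mathbf{1}\mid a\in(\g_{\Z})_{\bar{0}}\cup(\g_{\Z})_{\bar{1}}\}$, and then argue the same way for the modules.

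\textbf{Step 1: $\langle S\rangle_{\Z}\subseteq V_{\hat{\g}}(\ell,0)_{\Z}$.} In $V_{\hat{\g}}(\ell,0)$ we have $Y(a(-1)\mathbf{1},x)=\sum_{n\in\Z}a(n)x^{-n-1}$. So the component operators of the generators in $S$ are exactly the actions of the elements $a(n)\in\hat{\g}_{\Z}$. By Proposition \ref{(prop 2.7)}, $\langle S\rangle_{\Z}$ is the $\Z$-span of the coefficients of
\[
Y(u^1,x_1)\cdots Y(u^k,x_k)\mathbf{1}, \qquad u^i\in S.
\]
Each such coefficient has the form $a_1(n_1)\cdots a_k(n_k)\mathbf{1}$ with $a_i\in\g_{\Z}$ homogeneous. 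This lies in $U_{\Z}(\hat{\g})\mathbf{1}=V_{\hat{\g}}(\ell,0)_{\Z}$, because $U_{\Z}(\hat{\g})$ is a subring containing $\hat{\g}_{\Z}$ (Proposition \ref{prop:2.5}); this uses $\ell\in\Z$ and the fact that the form is integral on the basis, so brackets stay integral. The parities agree automatically.

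\textbf{Step 2: $V_{\hat{\g}}(\ell,0)_{\Z}\subseteq\langle S\rangle_{\Z}$.} By (\ref{eq:3.13}), with $U_{\Z}=\Z\mathbf{1}$ (the trivial module on which $\c$ acts by $\ell$),
\[
V_{\hat{\g}}(\ell,0)_{\Z}=U_{\Z}(\hat{\g}_{-})\mathbf{1}.
\]
Hence it is spanned over $\Z$ by PBW monomials $a_{j_1}(m_1)^{k_1}\cdots b_{j'_p}(n_p)\mathbf{1}$ with all modes negative. Each such monomial is a product of operators $a(n)=(a(-1)\mathbf{1})(n)$ applied to $\mathbf{1}$, i.e.\ a coefficient of an expression of the form (\ref{eq:3.5}). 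It therefore lies in $\langle S\rangle_{\Z}$, which is closed under all $n$-th products with elements of $S$. This gives equality.

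\textbf{Step 3: modules.} For $W=V_{\hat{\g}}(\ell,U)$ or $L_{\hat{\g}}(\ell,U)$, Theorem \ref{(th3.3)} and (\ref{eq:3.13}) give $W_{\Z}=U_{\Z}(\hat{\g}_{-})U_{\Z}$. By the module part of Proposition \ref{(prop 2.7)}, the $\langle S\rangle_{\Z}$-submodule generated by $U_{\Z}$ is the $\Z$-span of the coefficients of $Y(u^1,x_1)\cdots Y(u^k,x_k)w$ with $w\in U_{\Z}$. These are products $a_1(n_1)\cdots a_k(n_k)w$, which is exactly $U_{\Z}(\hat{\g})U_{\Z}=W_{\Z}$: one inclusion holds because $U_{\Z}(\hat{\g})$ contains these products, and the other because PBW monomials are such products. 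Since $\langle S\rangle_{\Z}=V_{\hat{\g}}(\ell,0)_{\Z}$ by Step 2, this is the submodule generated by $U_{\Z}$ over $V_{\hat{\g}}(\ell,0)_{\Z}$. Homogeneous parity of $U_{\Z}$ ensures the $\Z_2$-gradings match.

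The main point needing care is the Step 1 claim that arbitrary (non-PBW-ordered) products of elements $a(n)$, $a\in\g_{\Z}$, stay in $U_{\Z}(\hat{\g})$. This is what makes the $\Z$-span in Proposition \ref{prop:2.5} a subring. Reordering uses super-commutators, which produce $[a,b](m+n)$ with $[a,b]\in\g_{\Z}$ and central terms $m\delta_{m+n,0}(a,b)\c$. These are integral because $(\cdot,\cdot)$ is integer-valued and $\c$ acts by $\ell\in\Z$. This is precisely the closure asserted in Proposition \ref{prop:2.5}, which we use.
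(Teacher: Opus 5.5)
Your proof is correct and is essentially the paper's argument. Both use Proposition~\ref{(prop 2.7)} together with $Y(a(-1)\mathbf{1},x)=\sum_{n\in\Z}a(n)x^{-n-1}$ to identify the generated vertex subsuperalgebra (resp.\ submodule) with the $\Z$-span of products $a_1(n_1)\cdots a_k(n_k)u$, then match this with the PBW spanning set of $U_{\Z}(\hat{\g}_{-})U_{\Z}$; your appeal to $U_{\Z}(\hat{\g})$ being a subring does the job of the paper's ``move the positive modes to the right''.

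One caution about your closing paragraph: straightening must also remove repeated odd factors, via $b(n)^2=\tfrac{1}{2}[b,b](2n)$, which needs $[b,b]\in 2\g_{\Z}$ for odd $b\in\g_{\Z}$. This holds for a Chevalley basis but not for an arbitrary integral form, so the closure rests on Proposition~\ref{prop:2.5} under that implicit hypothesis (a dependence the paper's own proof shares), not on commutator bookkeeping alone.
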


\begin{proof}
	Let $W$ be the module $V_{\hat{\g}}(\ell,U)$ or $L_{\hat{\g}}(\ell,U)$.
Note that for $U=\C_{\ell}$, $V_{\hat{\g}}(\ell,U)=V_{\hat{\g}}(\ell,0)$
 and $L_{\hat{\g}}(\ell,U)=L_{\hat{\g}}(\ell,0)$.
By Proposition \ref{prop:2.5} and equation (\ref{eq:3.13}),
	$W_{\Z}=U_{\Z}(\hat{\mathfrak{g}})U_{\Z}
	=U_{\Z}(\hat{\mathfrak{g}}_{(-)})U_{\Z}$
is the $\Z$-span of elements of the form
	\begin{equation}\label{(eq2.26)}
		a_{i_1}(-n_{i_1})^{l_1}\cdots a_{i_k}(-n_{i_k})^{l_k}b_{j_1}(-n_{j_1})\cdots b_{j_s}(-n_{j_s})u
	\end{equation}
	where
$a_{i}\in (\g_{\Z})_{\bar{0}}$, $b_{j}\in (\g_{\Z})_{\bar{1}}$,
$l_{j}, n_i\in \N$, and
$u\in (U_{\Z})_{\bar{0}}\cup (U_{\Z})_{\bar{1}}$, .
	On the other hand,
recall that
	\begin{equation}\label{(eq33)}
		Y(a(-1)\mathbf{1},x)=\sum_{n\in\Z}a(n)x^{-n-1}\;\;\mbox{for}\;a\in\g.
	\end{equation}
	By Proposition \ref{(prop 2.7)},
	  the vertex subsuperalgebra over $\Z$ $V_{\Z}$ generated by $a(-1)\mathbf{1}$
	  for $a\in(\g_{\Z})_{\bar{0}}\cup (\g_{\Z})_{\bar{1}}$ is
	  the integral span of vectors of the form
	\begin{equation}\label{(eq2.30)}
	a_1(n_1)\cdots a_k(n_k)\mathbf{1}
	\end{equation}
	for $a_i\in(\g_{\Z})_{\bar{0}}\cup (\g_{\Z})_{\bar{1}}$,
	$n_i\in \Z$, $k\in \Z_{+}$.
Notice that $a_i(n)\mathbf{1}=0$ if $n> 0$ and any $a_i(n_i)$
occurring in (\ref{(eq2.30)}) with $n_i>0$ can be moved to the right side
 using the commutation relations (\ref{eq:brac}).
Hence,
 in the case of $U=\C_{\ell}$,
 the set of elements of the form
 \ref{(eq2.30)} is the same as the set of elements of
 the form (\ref{(eq2.26)}).
This proves the first assertion, and the second part follows similarly.
\end{proof}

\begin{coro}\label{(coro3.5)}
	In the setting of Proposition \ref{(prop3.4)},
 $L_{\hat{\g}}(\ell,0)_{\Z}$ is the vertex subsuperalgebra over $\Z$ generated by
 vectors of the form $a(-1)\mathbf{1}$ for $a$ being the homogeneous $\Z$-basis in $\g_{\Z}$.
\end{coro}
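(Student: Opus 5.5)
The plan is to deduce the corollary from Proposition \ref{(prop3.4)} applied with $U=\C_{\ell}$, passing to the quotient $L_{\hat{\g}}(\ell,0)$. Taking $U=\C_{\ell}$ with integral form $U_{\Z}=\Z\mathbf{1}$, Theorem \ref{(th3.3)} gives $L_{\hat{\g}}(\ell,0)_{\Z}=U_{\Z}(\hat{\g})\mathbf{1}=U_{\Z}(\hat{\g}_{(-)})\mathbf{1}$. By (\ref{eq:3.13}), this is the $\Z$-span of the images in $L_{\hat{\g}}(\ell,0)$ of the PBW-type vectors (\ref{(eq2.26)}) with $u=\mathbf{1}$. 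Equivalently, $L_{\hat{\g}}(\ell,0)_{\Z}$ is the image of $V_{\hat{\g}}(\ell,0)_{\Z}$ under the canonical surjection $\pi: V_{\hat{\g}}(\ell,0)\to L_{\hat{\g}}(\ell,0)$. This holds because $\pi$ is a $\hat{\g}$-module map sending vacuum to vacuum.

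Next, $\pi$ is a homomorphism of vertex superalgebras, and it intertwines $Y(a(-1)\mathbf{1},x)$ with the field $\sum_n a(n)x^{-n-1}$ on $L_{\hat{\g}}(\ell,0)$. By Proposition \ref{(prop 2.7)}, the vertex subsuperalgebra over $\Z$ generated by the images $\pi(a(-1)\mathbf{1})=a(-1)\mathbf{1}$ is the $\Z$-span of the coefficients $a_1(n_1)\cdots a_k(n_k)\mathbf{1}$ in $L_{\hat{\g}}(\ell,0)$. Here the $a_i$ run through the homogeneous $\Z$-basis of $\g_{\Z}$; by linearity this generates the same subsuperalgebra as using all homogeneous elements of $\g_{\Z}$. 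This span is exactly $\pi$ applied to the corresponding span in $V_{\hat{\g}}(\ell,0)$. By Proposition \ref{(prop3.4)}, the latter span is $V_{\hat{\g}}(\ell,0)_{\Z}$.

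Combining the two steps, $\langle a(-1)\mathbf{1}\rangle_{\Z}=\pi(V_{\hat{\g}}(\ell,0)_{\Z})=L_{\hat{\g}}(\ell,0)_{\Z}$.

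The only point needing care is the reordering argument: the $\Z$-span of arbitrary products $a_1(n_1)\cdots a_k(n_k)\mathbf{1}$ coincides with the span of ordered negative-mode monomials. This requires that moving positive modes to the right via (\ref{eq:brac}) introduces only integer coefficients. That holds because the structure constants of $\g_{\Z}$ are integral, $(\cdot,\cdot)$ is integer-valued on $\g_{\Z}$, $m$ is an integer, and $\ell\in\Z$. The same argument already underlies Proposition \ref{(prop3.4)} and simply descends to the quotient. I expect no genuine obstacle beyond checking that the quotient map respects the $\Z_2$-grading, which is immediate since $\pi$ is even.
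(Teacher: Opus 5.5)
Your argument is correct, but it is organized differently from the paper's. The paper gives no separate proof of this corollary. The proof of Proposition~\ref{(prop3.4)} is written for $W$ equal to $V_{\hat{\g}}(\ell,U)$ or $L_{\hat{\g}}(\ell,U)$, and for $U=\C_{\ell}$ it runs the spanning-and-reordering argument directly inside $L_{\hat{\g}}(\ell,0)$. There it identifies the $\Z$-span of the PBW vectors (\ref{(eq2.26)}) with the $\Z$-span of the vectors (\ref{(eq2.30)}). Equivalently, the module part of Proposition~\ref{(prop3.4)} with $U=\C_{\ell}$, combined with Proposition~\ref{(prop 2.7)}, gives the statement.

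You instead use only the $V_{\hat{\g}}(\ell,0)$ case and transport it along the quotient map $\pi$. You rely on two facts: $\pi(U_{\Z}(\hat{\g})\mathbf{1})=U_{\Z}(\hat{\g})\mathbf{1}$, and $\pi(\langle S\rangle_{\Z})=\langle \pi(S)\rangle_{\Z}$. The latter follows from Proposition~\ref{(prop 2.7)} because $\pi$ commutes with every $a(n)$ and fixes the vacuum.

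This functorial route needs no computation in $L_{\hat{\g}}(\ell,0)$ at all, and it applies verbatim to any quotient of $V_{\hat{\g}}(\ell,0)$ by an ideal. For the same reason, your closing paragraph about re-checking the integrality of the reordering is superfluous: that bookkeeping is already contained in Proposition~\ref{(prop3.4)} for $V_{\hat{\g}}(\ell,0)$. The paper's direct route has the advantage of treating the vacuum module and the modules $L_{\hat{\g}}(\ell,U)$ in one uniform argument.

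Your remark on generators correctly fills a small point the paper leaves implicit. Generating by a homogeneous $\Z$-basis of $\g_{\Z}$ gives the same subsuperalgebra as generating by all homogeneous elements of $\g_{\Z}$.
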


From now on, in this subsection,
we assume that
$\g$ is a basic classical Lie superalgebra,
 excluding types $A(1,1)$ and $D(2,1;a)$ with $a\notin \Z$.
Recall the integral form $\tilde{U}_{\Z}(\hat{\g})$ constructed
at the end of Section \ref{sec:2.2},
i.e. $\tilde{U}_{\Z}(\hat{\g})$ is the $\Z$-subalgebra generated by
$
	X_{\beta}(m)^{(r)}$, $X_{\gamma}(m')
$
for $\beta \in \Delta_{\bar{0}}$, $\gamma\in \Delta_{\bar{1}}$,
$m,m'\in \Z$, and $r\in \Z_+$.
There is another way to obtain an integral form
 for $V_{\hat{\g}}(\ell,0)$ and its modules.

\begin{thm}\label{(th3.9)}
	Suppose $\ell \in \Z$.
	Then the integral form $V_{\hat{\g}}(\ell,0)_{\Z}$ is
the vertex subsuperalgebra over $\Z$ of $V_{\hat{\g}}(\ell,0)$ generated by the elements
$X_{\beta}(-1)^{(r)}\mathbf{1}$,
		$X_{\gamma}(-1)\mathbf{1}$,
	where $r\in \Z_{+}$,  and $X_{\beta}$, $X_{\gamma}$ are root vectors in the
	chosen Chevalley basis of $\g$ for $\beta \in \Delta_{\bar{0}}$,
 $\gamma\in \Delta_{\bar{1}}$, respectively.
	Moreover, if $U=U_{\bar{0}}\oplus U_{\bar{1}}$ is a finite-dimensional
 $\g$-module with integral form $U_{\Z}$, and $W$ is either
 $V_{\hat{\g}}(\ell,U)$ or $L_{\hat{\g}}(\ell,U)$,
 then $W_{\Z}$ is the $V_{\hat{\g}}(\ell,0)_{\Z}$-module generated by $U_{\Z}$.
\end{thm}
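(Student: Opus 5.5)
Here $V_{\hat{\g}}(\ell,0)_{\Z}$ should be read as $\tilde{U}_{\Z}(\hat{\g})\mathbf{1}$, and more generally $W_{\Z}=\tilde{U}_{\Z}(\hat{\g})U_{\Z}$, i.e.\ the construction of Theorem \ref{(th3.3)} with $U_{\Z}(\hat{\g})$ replaced by the integral form $\tilde{U}_{\Z}(\hat{\g})$ of Theorem \ref{th:anotherform)}. The plan follows the proof of Proposition \ref{(prop3.4)}: show that $\tilde{U}_{\Z}(\hat{\g})\mathbf{1}$ and the vertex subsuperalgebra $\langle S\rangle_{\Z}$ generated by $S=\{X_{\beta}(-1)^{(r)}\mathbf{1},X_{\gamma}(-1)\mathbf{1}\}$ contain each other.

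\emph{Step 1: $\tilde{U}_{\Z}(\hat{\g})\mathbf{1}$ is a vertex subsuperalgebra over $\Z$ containing $S$.} Containment of $S$ is clear. For closure, first note that $\tilde{U}_{\Z}(\hat{\g})\mathbf{1}$ is spanned by monomials in the generators applied to $\mathbf{1}$, so it suffices to show that each component of $Y(X_{\beta}(-1)^{(r)}\mathbf{1},x)$ and of $Y(X_{\gamma}(-1)\mathbf{1},x)$ preserves $\tilde{U}_{\Z}(\hat{\g})\mathbf{1}$. For odd $\gamma$ we have $Y(X_{\gamma}(-1)\mathbf{1},x)=X_{\gamma}(x)$, whose modes are generators. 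For even $\beta$, the standard identity (as in \cite{M1}, \cite{M2}) is
\[
Y(X_{\beta}(-1)^{(r)}\mathbf{1},x)=\frac{X_{\beta}(x)^{r}}{r!}
\]
(a normally ordered power, which is unambiguous since the modes $X_{\beta}(m)$ mutually commute when $2\beta\notin\Delta$, $\beta\ne0$). Its coefficients are sums over $m_1\le\cdots\le m_p$ of products $X_{\beta}(m_1)^{(s_1)}\cdots X_{\beta}(m_p)^{(s_p)}$, all lying in $\tilde{U}_{\Z}(\hat{\g})$. It then remains to see that $\tilde{U}_{\Z}(\hat{\g})\mathbf{1}$ is closed under all vertex products. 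This needs the lemma that $\langle S\rangle_{\Z}$ is exactly the span of coefficients of the products (\ref{eq:3.5}); equivalently, by Proposition \ref{(prop 2.7)} applied to $S$, $\langle S\rangle_{\Z}$ is the span of coefficients of $Y(u^1,x_1)\cdots Y(u^k,x_k)\mathbf{1}$, which lies in $\tilde{U}_{\Z}(\hat{\g})\mathbf{1}$ by the above. This gives $\langle S\rangle_{\Z}\subseteq\tilde{U}_{\Z}(\hat{\g})\mathbf{1}$.

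\emph{Step 2: the reverse inclusion.} The same coefficient extraction shows that every operator $X_{\beta}(m)^{(r)}$ and $X_{\gamma}(m)$ occurs as a $\Z$-combination of coefficients of the vertex operators of elements of $S$. I would argue by induction on $r$: the coefficient of $x^{-r m - r}$ in $X_{\beta}(x)^{r}/r!$ is $X_{\beta}(m)^{(r)}$ plus products of lower divided powers, which lie in $\langle S\rangle_{\Z}$'s action by induction. Hence $\langle S\rangle_{\Z}$ is stable under the generators of $\tilde{U}_{\Z}(\hat{\g})$. By Theorem \ref{th:anotherform)}, $\tilde{U}_{\Z}(\hat{\g})$ is generated by these elements alone, so $\tilde{U}_{\Z}(\hat{\g})\mathbf{1}\subseteq\langle S\rangle_{\Z}$. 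That $\tilde{U}_{\Z}(\hat{\g})\mathbf{1}$ is an integral form compatible with the gradings follows exactly as in Theorem \ref{(th3.3)}, using the triangular decomposition (\ref{eq3.12}) and the grading (\ref{eq:2.15}) for $\tilde{U}_{\Z}(\hat{\g})$.

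\emph{Step 3: the module statement.} Apply the module half of Proposition \ref{(prop 2.7)} with $T=U_{\Z}$. This is the same argument once we know $\tilde{U}_{\Z}(\hat{\g}_{0})U_{\Z}\subseteq U_{\Z}$, which I take as part of the hypothesis that $U_{\Z}$ is an integral form (with divided powers acting integrally; this is automatic for finite-dimensional $U$ with an admissible lattice). I expect the main obstacle to be Step 2's identification of $X_{\beta}(m)^{(r)}$ through coefficient extraction, namely the triangularity argument. The rest reduces to results already stated.
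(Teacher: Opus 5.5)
Your proposal is correct and follows essentially the same route as the paper. It identifies $V_{\hat{\g}}(\ell,0)_{\Z}$ with $\tilde{U}_{\Z}(\hat{\g})\mathbf{1}$ and gets $\langle S\rangle_{\Z}\subseteq\tilde{U}_{\Z}(\hat{\g})\mathbf{1}$ from Proposition \ref{(prop 2.7)} together with $Y(X_{\beta}(-1)^{(r)}\mathbf{1},x)=Y(X_{\beta},x)^{(r)}$ and $Y(X_{\gamma}(-1)\mathbf{1},x)=\sum_{m}X_{\gamma}(m)x^{-m-1}$; the reverse inclusion comes by induction on $r$ in $X_{\beta}(m)^{(r)}=(X_{\beta}(-1)^{(r)}\mathbf{1})(rm+r-1)-(\text{products of lower divided powers})$ plus the generation statement of Theorem \ref{th:anotherform)}.

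Two small remarks. First, your Step 1 heading overstates what is shown there: closure of $\tilde{U}_{\Z}(\hat{\g})\mathbf{1}$ under all vertex products only follows once Step 2 gives equality. Second, the condition $\tilde{U}_{\Z}(\hat{\g}_{0})U_{\Z}\subseteq U_{\Z}$ that you flag, which the paper leaves implicit, is needed for $W_{\Z}$ to be an integral form with $W_{\Z}\cap U=U_{\Z}$, not for the equality of $W_{\Z}$ with the submodule generated by $U_{\Z}$.
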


\begin{proof}
	Since $\tilde{U}_{\Z}(\g)$ is generated as a superalgebra over $\Z$ by the divided powers
	$X_{\beta}(m)^{(n)}$ and $X_{\gamma}(m')$,
	where $\beta\in \Delta_{\bar{0}}$,
	 $\gamma\in \Delta_{\bar{1}}$, $m,m'\in\Z$ and $n\in\Z_{+}$,
	 we can express $W_{\Z}=\tilde{U}_{\Z}(\hat{\g})U_{\Z}$ as the $\Z$-span of the elements of the form
\begin{equation}\label{eq:3.16}
			\begin{split}
			( \prod_{\beta\in \Delta_{\bar{0}} }f_{\beta}) ( \prod_{\gamma\in \Delta_{\bar{1}}} f_{\gamma}) u
			 	=\prod_{\beta\in \Delta_{\bar{0}}} ( X_{\beta}(m_1)^{(n_{\beta,1})}\cdots X_{\beta}(m_p)^{(n_{\beta,p})})
  \prod_{\gamma\in \Delta_{\bar{1}}} ( X_{\gamma}(m_1')\cdots X_{\gamma}(m_q'))  u
			\end{split}
		\end{equation}		
where  $u\in (U_{\Z})_{\bar{0}}\cup (U_{\Z})_{\bar{1}}$,
 $m_1\ge \ldots \ge m_p$, $m_1'\ge \ldots\ge m_q'$,
 $n_{\beta,j}\in \Z_{+}$. (Note that when $U$ is $\C_{\ell}$, $u=\mathbf{1}$.)
By Proposition \ref{(prop 2.7)},
it suffices to show that the $\Z$-span of (\ref{eq:3.16}) is equivalent to
 the $\Z$-span of the coefficients of the form
\begin{equation}
\prod_{\beta\in \Delta_{\bar{0}}}Y(X_{\beta}(-1)^{(n)}\mathbf{1},x) \prod_{\gamma\in \Delta_{\bar{1}}}Y(X_{\gamma}(-1)\mathbf{1},x)u,
\;\;n\in\Z_{+}.
\end{equation}

Now we analyze the vertex operator associated with the generators
$X_{\beta}(-1)^{(n)}\mathbf{1}$ and $X_{\gamma}(-1)\mathbf{1}$
	for $\beta\in \Delta_{\bar{0}}$,
	$\gamma\in \Delta_{\bar{1}}$ and $n\in\Z_{+}$.
	For $\beta\in \Delta_{\bar{0}}$, we have
\begin{equation}
[X_{\beta}(m),X_{\beta}(-1)]=[X_{\beta},X_{\beta}](m-1)+m\delta_{m-1,0}(X_{\beta},X_{\beta})\ell=0.
\end{equation}
Hence,
$
		X_{\beta}(m)X_{\beta}=X_{\beta}(m)X_{\beta}(-1)\mathbf{1}=X_{\beta}(-1)X_{\beta}(m)\mathbf{1}=0$
for $m\in \Z_+$.
	Since the even part of a vertex superalgebra is a vertex algebra,
	by Corollary 3.10.4 of \cite{LL}, we have
	$
		Y(X_{\beta}(-1)^{n}\mathbf{1},x)=Y(X_{\beta},x)^{n}$
for $n\in \Z_+$.
	Subsequently,
$Y(X_{\beta}(-1)^{(n)}\mathbf{1},x)=Y(X_{\beta},x)^{(n)}.
$
	Therefore, by (3.32) of \cite{M1},
	the coefficient of $x^{-k-n} (k\in \Z)$ in $Y(X_{\beta}(-1)^{(n)}\mathbf{1},x)$ is
	\begin{equation}\label{(eq3.34)}
		\sum X_{\beta}(n_1)^{(i_1)}\cdots X_{\beta}(n_m)^{(i_m)},
	\end{equation}
	where $i_1+\cdots+i_m=n$, $n_1i_1+\cdots+n_mi_m=k$,
and the sum is taken over all partitions of $k$ into $n$ parts.
For the vertex operator associated to the generators
$X_{\gamma}(-1)\mathbf{1}$ for $\gamma\in \Delta_{\bar{1}}$,
since
	\begin{equation}\label{(eq3.35)}
		Y(X_{\gamma}(-1)\mathbf{1},x)=\sum_{m'\in \mathbb{Z}}X_{\gamma}(m')x^{-m'-1},
	\end{equation}
	the coefficient of $x^{-m'-1}$ in $Y(X_{\gamma}(-1)\mathbf{1},x)$ is $X_{\gamma}(m')$.
	
	 Consider the case $U=\C\mathbf{1}$, it is evident from
 (\ref{(eq3.34)}) and (\ref{(eq3.35)}) that the vertex subsuperalgebra over $\Z$
 generated by $X_{\beta}(-1)^{(n)}\mathbf{1}$,  $X_{\gamma}(-1)\mathbf{1}$
 for $\beta\in\Delta_{\bar{0}}$, $\gamma\in\Delta_{\bar{1}}$, $n\in\Z_{+}$
  is contained in $V_{\hat{\g}}(\ell,0)_{\Z}$.
	 Conversely, we need to show that
	 $X_{\beta}(m)^{(n)}, X_{\gamma}(m')$ preserve the vertex subsuperalgebra over $\Z$
generated by the vectors $X_{\beta}(-1)^{(n)}\mathbf{1}$, $X_{\gamma}(-1)\mathbf{1}$
 for $\beta\in\Delta_{\bar{0}}$, $\gamma\in\Delta_{\bar{1}}$, $n\in\Z_{+}$.
Note that by (\ref{(eq3.35)}),
	$ X_{\gamma}(m')=(X_{\gamma}(-1){\bf 1})(m^{\prime})$.
Hence, $ X_{\gamma}(m')$ preserve the vertex subsuperalgebra over $\Z$.
 For $\beta\in\Delta_{\bar{0}}$,
we have (see (3.33) of \cite{M1})
	 \begin{equation}
	 	X_{\beta}(m)^{(n)}=(X_{\beta}(-1)^{(n)}\mathbf{1})(mn+n-1)-\sum X_{\beta}(n_1)^{(i_1)}\cdots X_{\beta}(n_m)^{(i_m)},
	 \end{equation}
	 where the sum does not include the partition $(m, \ldots, m)$.
	 Since each $i_j<n$ on the right side, by induction
	  every term on the right side preserves the vertex subsuperalgebra over $\Z$,
	  and so does $X_{\beta}(m)^{(n)}$.
 Since $\mathbf{1}$ is in any vertex subsuperalgebra over $\Z$,
	  (\ref{eq:3.16}) implies that $V_{\hat{\g}}(\ell,0)_{\Z}$ is
 contained in the vertex subsuperalgebra over $\Z$ generated by the
 vectors $X_{\beta}(-1)^{(n)}\mathbf{1}$, $X_{\gamma}(-1)\mathbf{1}$
  for $\beta\in\Delta_{\bar{0}}$, $\gamma\in\Delta_{\bar{1}}$, $n\in\Z_{+}$.
	    In the same way,
	     for any finite-dimensional $\g$ module $U$ with integral
	     form $U_{\Z}$,
	  	 $V_{\hat{\g}}(\ell,U)_{\Z}$ and
	  $L_{\hat{\g}}(\ell,U)_{\Z}$ are the $V_{\hat{\g}}(\ell,0)_{\Z}$-modules generated by $U_{\Z}$.
\end{proof}

By Theorem  \ref{(th3.9)}, and Proposition \ref{(prop 2.7)},
we have the following corollary.
\begin{coro}\label{(coro3.12)}
	The integral form $L_{\hat{\g}}(\ell,0)_{\Z}$ of $L_{\hat{\g}}(\ell,0)$ is
the integral form of $L_{\hat{\g}}(\ell,0)$ as a vertex superalgebra generated
by the vectors $X_{\beta}(-1)^{(n)}\mathbf{1}$, $X_{\gamma}(-1)\mathbf{1},$
 with $\beta\in\Delta_{\bar{0}}$, $\gamma \in \Delta_{\bar{1}}$, $n\in\Z_{+}$.
\end{coro}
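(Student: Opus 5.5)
The corollary should follow from Theorem \ref{(th3.9)} with $W=L_{\hat{\g}}(\ell,0)$, i.e.\ $U=\C_{\ell}$ with $U_{\Z}=\Z\mathbf{1}$, combined with Proposition \ref{(prop 2.7)}.

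\emph{Identifying $L_{\hat{\g}}(\ell,0)_{\Z}$.} Take it to be $\tilde{U}_{\Z}(\hat{\g})\bar{\mathbf{1}}$, where $\bar{\mathbf{1}}$ is the image of $\mathbf{1}$ under the quotient map $\pi\colon V_{\hat{\g}}(\ell,0)\to L_{\hat{\g}}(\ell,0)$. Since $\pi$ is a $\hat{\g}$-module map, this equals $\pi(V_{\hat{\g}}(\ell,0)_{\Z})$. Theorem \ref{th:anotherform)} says $\tilde{U}_{\Z}(\hat{\g})$ is an integral form satisfying (\ref{eq3.12}) and (\ref{eq:2.15}). Hence the argument of Theorem \ref{(th3.3)} applies verbatim with $\tilde{U}_{\Z}(\hat{\g})$ in place of $U_{\Z}(\hat{\g})$, and shows that $L_{\hat{\g}}(\ell,0)_{\Z}$ is an integral form of $L_{\hat{\g}}(\ell,0)$ as a super vector space, compatible with the $\Z_2$-grading and the conformal weight grading. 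It contains $\bar{\mathbf{1}}$.

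\emph{Closure under vertex operators.} I will show that $L_{\hat{\g}}(\ell,0)_{\Z}$ coincides with $\langle S\rangle_{\Z}$ for
$$S=\{X_{\beta}(-1)^{(n)}\bar{\mathbf{1}},\ X_{\gamma}(-1)\bar{\mathbf{1}}\mid \beta\in\Delta_{\bar{0}},\ \gamma\in\Delta_{\bar{1}},\ n\in\Z_{+}\}.$$
By Proposition \ref{(prop 2.7)}, $\langle S\rangle_{\Z}$ is the $\Z$-span of the coefficients of $Y(u^1,x_1)\cdots Y(u^k,x_k)\bar{\mathbf{1}}$ with $u^i\in S$. Because $\pi$ is a homomorphism of vertex superalgebras, $\pi(Y(u,x)v)=Y(\pi u,x)\pi v$, so these coefficients are exactly the images under $\pi$ of the corresponding coefficients in $V_{\hat{\g}}(\ell,0)$. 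By Theorem \ref{(th3.9)} and Proposition \ref{(prop 2.7)}, those coefficients span $V_{\hat{\g}}(\ell,0)_{\Z}$. Therefore $\langle S\rangle_{\Z}=\pi(V_{\hat{\g}}(\ell,0)_{\Z})=L_{\hat{\g}}(\ell,0)_{\Z}$. In particular $L_{\hat{\g}}(\ell,0)_{\Z}$ is itself a vertex subsuperalgebra over $\Z$, so it is an integral form of $L_{\hat{\g}}(\ell,0)$ in the sense of Definition \ref{(de:int of VSA)}, and it is generated by $S$.

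\emph{Main obstacle.} The delicate step is the first one: checking that $\pi(V_{\hat{\g}}(\ell,0)_{\Z})$ is a lattice in $L_{\hat{\g}}(\ell,0)$, i.e.\ that a $\Z$-basis of each weight space maps onto a set that is $\C$-linearly independent after tensoring with $\C$. I expect the argument of \cite[Theorem 3.2]{M1}, cited in Theorem \ref{(th3.3)}, to handle this. Each weight space of the image is a finitely generated, torsion-free $\Z$-module, since it sits inside a $\C$-vector space. So it is free, and it spans the weight space of $L_{\hat{\g}}(\ell,0)$ over $\C$ because $\pi$ is surjective. What remains to show is that a $\Z$-basis of this free module is $\C$-linearly independent; that independence is exactly the content of the cited argument.
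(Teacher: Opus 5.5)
Your proposal is correct and is essentially the paper's argument, which combines Theorem \ref{(th3.9)} with Proposition \ref{(prop 2.7)}. The only cosmetic difference is in how you pass to $L_{\hat{\g}}(\ell,0)$: you transport the vertex-algebra part of Theorem \ref{(th3.9)} from $V_{\hat{\g}}(\ell,0)$ via the quotient homomorphism $\pi$, whereas the paper uses the case $W=L_{\hat{\g}}(\ell,0)$ of that theorem, where the $V_{\hat{\g}}(\ell,0)$-action factors through $\pi$ anyway. The lattice property you flag is likewise not reproved in the paper but deferred to \cite[Theorem 3.2]{M1}, and it is already presupposed by the phrase ``the integral form $L_{\hat{\g}}(\ell,0)_{\Z}$'' in the statement.
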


\begin{coro}\label{(coro 3.13)}
	If $\g$ is a finite-dimensional simply-laced simple Lie algebra
or a basic classical Lie superalgebra of type $A(m,n)$ ($m,n\neq 1$),
 and $\ell$ is a positive integer,
then the integral form $L_{\hat{\g}}(\ell,0)_{\Z}$ of $L_{\hat{\g}}(\ell,0)$ is
generated by the vectors $X_{\beta}(-1)^{(r)}\mathbf{1}$ and $X_{\gamma}(-1)\mathbf{1}$,
 where $\beta\in \Delta_{\bar{0}}$, $\gamma\in \Delta_{\bar{1}}$, and $0\le r\le \ell$.
\end{coro}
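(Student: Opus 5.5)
By Corollary \ref{(coro3.12)}, $L_{\hat{\g}}(\ell,0)_{\Z}$ is already generated by the vectors $X_{\beta}(-1)^{(n)}\mathbf{1}$ for all $n\in\Z_+$ and $X_{\gamma}(-1)\mathbf{1}$. So it suffices to show that for $\beta\in\Delta_{\bar{0}}$ and $n>\ell$ we have $X_{\beta}(-1)^{(n)}\mathbf{1}=0$ in $L_{\hat{\g}}(\ell,0)$. Then the generators with $r>\ell$ contribute nothing, and the remaining generators $r\le\ell$ generate the same vertex subsuperalgebra over $\Z$ (via Proposition \ref{(prop 2.7)}).

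\textbf{Vanishing.} The key is the classical integrability relation $e_{\theta}(-1)^{\ell+1}\mathbf{1}=0$ in $L_{\hat{\g}}(\ell,0)$ for $\ell\in\N$. We treat each case separately.

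When $\g$ is a simply-laced simple Lie algebra, $L_{\hat{\g}}(\ell,0)$ is an integrable $\hat{\g}$-module. All roots have the same length, and with our normalization $(\beta,\beta)=2$, so the level relative to each $\mathfrak{sl}_2$-triple $\{X_{\beta}(-1),X_{-\beta}(1),-H_\beta+\ell\c\}$ is exactly $\ell$. The standard $\mathfrak{sl}_2$-argument then gives $X_{\beta}(-1)^{\ell+1}\mathbf{1}=0$ for every root $\beta$, hence $X_{\beta}(-1)^{(n)}\mathbf{1}=0$ for $n>\ell$. Concretely, $\mathbf{1}$ is a lowest-weight vector of weight $\ell$ for this $\mathfrak{sl}_2$, and in the simple quotient the singular vector $X_\beta(-1)^{\ell+1}\mathbf{1}$ vanishes. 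Alternatively, one can use the Weyl group to reduce to $\beta=\theta$.

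For $\g$ of type $A(m,n)$, i.e.\ $\mathfrak{sl}(m+1|n+1)$, we have $\g_{\bar{0}}\supseteq\mathfrak{sl}_{m+1}\oplus\mathfrak{sl}_{n+1}$, and the invariant form restricts to the two components with opposite signs. For even roots $\beta$ in the $\mathfrak{sl}_{m+1}$ factor, the level seen by that factor is $\ell$. In the simple quotient, the vector $X_{\beta}(-1)^{\ell+1}\mathbf{1}$ is annihilated by all $x(k)$ with $k>0$ — one checks this on generators of $\hat{\g}_+$ using the bracket (\ref{eq:brac}) and $\beta+\alpha$ root-string considerations. So it generates a proper submodule and is therefore zero. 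The hypothesis $m,n\neq1$ is presumably what makes the Chevalley normalization and this singular-vector computation work uniformly, and the $A(1,1)$ case is excluded anyway.

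\textbf{Main obstacle.} The hardest step is the $\mathfrak{sl}_{n+1}$ factor: there the effective level is $-\ell$, not a positive integer, so the integrability argument fails. I would first try to verify, using $(X_\beta,X_{-\beta})$ in the Chevalley normalization, that the relevant level is indeed $\ell$ for every even root. If it is not, then the statement has to be read as only needing $r\le\ell$ for roots where vanishing holds. In that case I would fall back to showing that $X_\beta(-1)^{(n)}\mathbf{1}$ for $n>\ell$ lies in the $\Z$-span generated by lower divided powers together with odd generators. Here I would write $X_\beta=\pm[X_{\gamma},X_{\gamma'}]$ with $\gamma,\gamma'$ odd, and use the relation $X_\gamma(-1)^2\mathbf{1}=0$ for odd isotropic roots.

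Finally, I would conclude by Proposition \ref{(prop 2.7)}. The generating set with $0\le r\le\ell$ yields the same $\Z$-span of coefficients of products (\ref{eq:3.5}) as the full set, because all omitted generators are $0$ (or lie in the subalgebra generated by the others).
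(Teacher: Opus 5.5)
Your reduction for a simply-laced Lie algebra is the paper's argument: use Corollary \ref{(coro3.12)}, then show $X_\beta(-1)^{\ell+1}\mathbf{1}=0$ in $L_{\hat{\g}}(\ell,0)$ by integrability (the paper cites Proposition 6.6.4 of \cite{LL}). The genuine gap is the $A(m,n)$ case, which you leave open. The obstacle you found is real and cannot be verified away. The invariant form is a multiple $c\,\mathrm{str}$ of the supertrace. With the Chevalley normalization, $(X_\beta,X_{-\beta})=\frac12(H_\beta,H_\beta)$, which equals $c$ on one even block and $-c$ on the other. So for $m,n\ge1$ and $\ell>0$ there is always an even root $\beta$ with $\lambda:=-(X_\beta,X_{-\beta})\ell>0$, however the form is scaled. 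Put $e=X_\beta(-1)$ and $f=X_{-\beta}(1)$. By (\ref{eq:brac}), $h=[e,f]=H_\beta(0)-(X_\beta,X_{-\beta})\c$, so $f\mathbf{1}=0$, $h\mathbf{1}=\lambda\mathbf{1}$, $[h,e]=2e$, and
\[
fe^{N}\mathbf{1}=-N(\lambda+N-1)\,e^{N-1}\mathbf{1},\qquad N\ge 1 .
\]
Since $\lambda+N-1\neq0$, induction gives $X_\beta(-1)^N\mathbf{1}\neq0$ in $L_{\hat{\g}}(\ell,0)$ for every $N$. The paper's proof has exactly this defect. It asserts that all even roots have the same length and applies integrability to all of $\g_{\bar{0}}$ at level $\ell$. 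In fact the even roots of the two blocks have $(\beta,\beta)$ of opposite signs, so one block sits at negative level.

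Your fallback also fails. Take $\ell=1$. The proposed generators are $\mathbf{1}$, $X_{\beta'}(-1)\mathbf{1}$ and $X_\gamma(-1)\mathbf{1}$, and all their modes lie in $\hat{\g}_\Z$. By Proposition \ref{(prop 2.7)} and straightening, everything they generate is a $\Z$-combination of ordered PBW monomials (ordinary powers) in negative modes applied to $\mathbf{1}$. Straightening is legitimate here because brackets of Chevalley basis elements are integral, $\c$ acts by $\ell$, and $X_\gamma(n)^2=0$ since $2\gamma\notin\Delta$. Every root other than $\beta$ takes value at most $1$ on $H_\beta$. Hence the only such monomial of $\h$-weight $2\beta$ and degree $2$ is $X_\beta(-1)^2$. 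That weight space of $L_{\hat{\g}}(1,0)$ is one-dimensional and nonzero, and the generated $\Z$-span meets it only in $2\Z\,X_\beta(-1)^{(2)}\mathbf{1}$. It therefore misses $X_\beta(-1)^{(2)}\mathbf{1}\in L_{\hat{\g}}(1,0)_\Z$. Working over $\Z_{(p)}$ gives the same conclusion whenever $\ell+1=p$ is prime. So the statement as written fails for $A(m,n)$ with $m,n\ge2$ (e.g.\ at $\ell=1$), and re-reading it is not a proof of it. What your argument, like the paper's, actually proves is the simply-laced case. It also covers $A(m,0)$ and $A(0,n)$, provided the form is normalized so that the unique non-abelian even block has positive level.
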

\begin{proof}
	By our assumption of $\g$,
 all even roots of $\g$ have the same length (cf. \cite{H}, \cite{Kac2}, etc.).
	Note that the set of even roots $\Delta_{\bar{0}}$ is the root system of the Lie algebra $\g_{\bar{0}}$.
	Then the result follows from the fact that for any long root
$\beta\in \Delta_{\bar{0}}$, $X_{\beta}(-1)^{\ell+1}.v=0$,
where $v$ is a highest weight vector of a
 $\hat{\g}_{\bar{0}}$-module of level $\ell$ (see Proposition 6.6.4 in \cite{LL}).
 In particular, we have $X_{\beta}(-1)^{\ell+1}.{\bf 1}=0$ for $\beta\in \Delta_{\bar{0}}$.
 With Corollary \ref{(coro3.12)}, we get the result.
\end{proof}

\subsection{The conformal vector in an integral form}

Let $V$ be a vertex operator superalgebra with conformal vector
$\omega$ and central charge $c\in\C$.
 Suppose $V_{\Z}$ is an integral form of $V$.
	 If $V_{\Z}$ contains $k\omega$ where $k\in \C$, then $k^2c\in 2\Z$
	 (see Proposition 5.1 of \cite{M1}).
	 Hence, if $\omega$ is in any integral form of $V$,
the central charge $c$ must be an even integer.
Let $\L=\bigoplus_{n\in \Z}\C L(n)\oplus\C\c$ be the
 Virasoro algebra obtained from the conformal vector $\omega$,
 where $\c$ is central and $L(n)=\omega(n+1)$.
 The Lie bracket is
	 \begin{equation}\label{eq:virel}
	 	[L(m),L(n)]=(m-n)L(m+n)+\frac{m^3-m}{12}\delta_{m+n,0}c,\;\;m,n\in\Z.
	 \end{equation}
	 A vector $v \in V$ is called a {\em lowest weight vector} for $\L$ if
	  $L(0)v = mv$ for some $m \in \Z$ and $L(n)v = 0$ for $n \in \N$.

The following lemma is proved similarly to Lemma 5.3 in \cite{M1},
where the commutator formula is replaced by the super commutator formula (see (2.2.6) in \cite{L}).

\begin{lem}\label{(lem:3.14)}
	For any $m,n\in \Z$ and lowest weight vector $v$,
 $[L(m),v(n)]$ is an integral linear combination of operators $v(k)$ for $k\in \Z$.
\end{lem}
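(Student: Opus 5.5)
We use the super commutator formula for a vertex operator superalgebra (formula (2.2.6) in \cite{L}). Since $\omega$ is even, no sign appears, and for $u=\omega$ it reads
\begin{equation*}
[\omega(m+1),v(n)]=\sum_{i\ge 0}\binom{m+1}{i}\bigl(\omega(i)v\bigr)(m+n+1-i).
\end{equation*}
The binomial coefficients $\binom{m+1}{i}$ are integers for every $m\in\Z$, $i\in\Z_+$, whether $m+1$ is positive or negative. So the task reduces to showing that each $\omega(i)v=L(i-1)v$, $i\ge 0$, produces an integral combination of modes of $v$.

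We compute these terms from the definition of a lowest weight vector.
\begin{itemize}
\item For $i\ge 2$: $L(i-1)v=0$, so these terms vanish.
\item For $i=1$: $L(0)v=mv$ with $m\in\Z$ by hypothesis, so the term is $(m+1)\,m_v\,v(m+n)$, where $m_v$ denotes the weight of $v$. This is an integer multiple of $v(m+n)$.
\item For $i=0$: the vector is $L(-1)v$, and we apply the $L(-1)$-derivative property $(L(-1)v)(k)=-k\,v(k-1)$, which holds in any vertex operator superalgebra. The resulting term is $-(m+n+1)\,v(m+n)$.
\end{itemize}

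Summing the three cases,
\begin{equation*}
[L(m),v(n)]=\bigl((m+1)m_v-m-n-1\bigr)\,v(m+n),
\end{equation*}
which is an integral multiple of a single mode of $v$. This proves the lemma.

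The only care needed is to check that in the super setting the commutator formula carries no extra parity sign, because $\omega$ is even, and that the $L(-1)$-derivative property holds verbatim for vertex operator superalgebras in the conventions of \cite{L}. Both hold, so the argument of Lemma 5.3 in \cite{M1} carries over unchanged.
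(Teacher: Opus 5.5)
Your proof is correct and follows the paper's route: the paper proves this by carrying over Lemma 5.3 of \cite{M1} with the super commutator formula. That is exactly your computation $[L(m),v(n)]=\bigl((m+1)\,\mathrm{wt}\,v-m-n-1\bigr)v(m+n)$, using $L(i)v=0$ for $i\ge 1$, integrality of $\mathrm{wt}\,v$, the $L(-1)$-derivative property, and the fact that no sign appears because $\omega$ is even. The only blemish is notational: in the $i=1$ case you reuse $m$ for the $L(0)$-eigenvalue of $v$, which clashes with the mode index in $L(m)$; your switch to $m_v$ resolves this.
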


We show that under some assumptions,
there exists an integral form of $V$ containing $k\omega$.

\begin{thm}\label{thomeg}
	Suppose $V_{\Z}$ is an integral form of a vertex operator superalgebra $V$ generated by
	$\Z_{2}\times\Z$-homogeneous lowest weight vectors $\{v^j\}$ for the Virasoro algebra $\mathcal{L}$.
	If $k \in \Z$ is such that $k^2c \in 2\Z$ and $k\omega \in V_{\Q} = \Q \otimes_{\Z} V_{\Z}$,
then $V_{\Z}$ can be extended to an integral form of $V$ containing $k\omega$.
\end{thm}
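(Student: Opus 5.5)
The plan follows the proof of Theorem 5.4 in \cite{M1}, with the commutator formula replaced by the super commutator formula. We take $\tilde V_{\Z}$ to be the $\Z$-span of all vectors
\begin{equation*}
L(-n_1)\cdots L(-n_r)\,w, \qquad w\in V_{\Z},\; r\in\Z_+,
\end{equation*}
where each $L$ stands for $k\omega(n+1)=kL(n)$, that is, the operators $kL(n)$ for $n\in\Z$. Equivalently, $\tilde V_{\Z}$ is the $U_{\Z}$-span of $V_{\Z}$, where $U_{\Z}$ is the $\Z$-subalgebra of $U(\L)$ generated by the $kL(n)$, $n\in\Z$. We will show three things: $\tilde V_{\Z}$ is an integral form of $V$ as a super vector space compatible with the $L(0)$-grading; it is closed under all vertex operators; and it contains $k\omega$.

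\emph{Closure under vertex operators.} First, $k\omega=kL(-2)\mathbf 1\in\tilde V_{\Z}$ because $\mathbf 1\in V_{\Z}$. By the Virasoro relations \eqref{eq:virel}, $[kL(m),kL(n)]=k(m-n)\,kL(m+n)+k^2\frac{m^3-m}{12}\delta_{m+n,0}c$. Since $k^2c\in 2\Z$ and $(m^3-m)/6\in\Z$, this bracket lies in $U_{\Z}$. Hence $U_{\Z}$ is a $\Z$-form of the relevant subalgebra. Moreover $kL(0)$ acts on a homogeneous vector by an integer, so $\tilde V_{\Z}$ is graded by $L(0)$-weight. The parity grading is also preserved, since $\omega$ is even.

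By Proposition \ref{(prop 2.7)}, $V_{\Z}$ is the $\Z$-span of coefficients of $Y(v^{j_1},x_1)\cdots Y(v^{j_r},x_r)\mathbf 1$. By Lemma \ref{(lem:3.14)}, $[kL(m),v^j(n)]$ is an integral combination of the operators $v^j(p)$. Therefore every element of $\tilde V_{\Z}$ can be rewritten, moving all $kL$'s to the right onto $\mathbf 1$, as an integral combination of vectors $v^{j_1}(n_1)\cdots v^{j_r}(n_r)\,kL(-m_1)\cdots kL(-m_s)\mathbf 1$. The same lemma shows that $\tilde V_{\Z}$ is invariant under each $v^j(n)$ and each $kL(n)$. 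It remains to show closure under $u(n)$ for every $u\in\tilde V_{\Z}$. For this we use the standard fact that the set of $u$ whose modes preserve $\tilde V_{\Z}$ is closed under taking $u(n)u'$, by the super Jacobi identity (iterate formula) over $\Z$. This set contains the generators $v^j$ and $k\omega$, and hence all of $\tilde V_{\Z}$. In particular $\tilde V_{\Z}\supseteq V_{\Z}$ is a vertex subsuperalgebra over $\Z$.

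\emph{Main obstacle: lattice property.} We must show that $\tilde V_{\Z}\cap V_{(n)}$ is a finitely generated $\Z$-module spanning $V_{(n)}$. Spanning is clear, since $\tilde V_{\Z}\supseteq V_{\Z}$. For finite generation we use the hypothesis $k\omega\in V_{\Q}$, which gives $\tilde V_{\Z}\subseteq V_{\Q}$. It then suffices to show that each weight space of $\tilde V_{\Z}$ is finitely generated, because a finitely generated $\Z$-submodule of $V_{\Q}\cap V_{(n)}$ containing a basis of $V_{(n)}$ is free of the right rank.

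For finite generation, we use the reordering above together with the fact that $V_{(n)}=0$ for $n\ll0$ (and similarly for weight spaces of $V_{\Z}$). This lets us express each weight space of $\tilde V_{\Z}$ as spanned by $kL(-m_1)\cdots kL(-m_s)w$ with all $m_i\geq 1$ and $w\in V_{\Z}$ homogeneous. The operators with $m_i\le 0$ are pushed right and absorbed: $kL(0)$ acts integrally on $V_{\Z}$, and $kL(m)$ with $m>0$ maps $V_{\Z}$ into $\tilde V_{\Z}$ of lower weight, so an induction on weight handles them. Only finitely many such words with $w$ drawn from a finite basis of a weight space of $V_{\Z}$ land in $V_{(n)}$, which gives finite generation.

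Finally, $(\tilde V_{\Z})_\lambda=\tilde V_{\Z}\cap V_\lambda$ holds because every spanning element is parity-homogeneous. This completes the plan.
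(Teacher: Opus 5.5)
Your overall strategy is the paper's. Your $\tilde V_{\Z}=U_{\Z}V_{\Z}$ coincides with the vertex subsuperalgebra generated by $\{v^j\}$ and $k\omega$. The paper likewise uses Lemma \ref{(lem:3.14)} to rewrite it as the span of $(kL(m_1))\cdots(kL(m_j))w$ with $w\in V_{\Z}$, and then defers the lattice property to \cite{M1}. Your closure, parity, and rank arguments are fine: $k^2c\in 2\Z$ makes the Virasoro brackets integral, and $k\omega\in V_{\Q}$ forces $\tilde V_{\Z}\subseteq V_{\Q}$.

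\textbf{The gap is in your finite-generation step.} After PBW-ordering, you must dispose of words $kL(p_1)\cdots kL(p_t)w$ with all $p_i>0$ that lie in weight $n$. Here $w\in V_{\Z}$ has weight $n+\sum p_i>n$. Knowing only that $kL(p)$ maps $V_{\Z}$ into $\tilde V_{\Z}$ gives no reduction:
\begin{itemize}
\item for $t=1$, the vector $kL(p_1)w$ is exactly the element you started with;
\item for $t\ge 2$, the intermediate vector $kL(p_t)w$ has weight $n+\sum_{i<t}p_i>n$.
\end{itemize}
Since the weights of $w$ are unbounded above, no induction on weight can close this, and the claimed spanning set with only modes $m_i\ge 1$ is not justified.

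\textbf{The missing fact} is the stronger statement $kL(p)V_{\Z}\subseteq V_{\Z}$ for every $p\ge -1$. It holds for three reasons:
\begin{itemize}
\item $V_{\Z}$ is spanned by $v^{j_1}(n_1)\cdots v^{j_r}(n_r)\mathbf 1$;
\item by Lemma \ref{(lem:3.14)}, each $[kL(p),v^{j}(n)]$ is an integral combination of modes of $v^{j}$ (in fact an integral multiple of $v^j(p+n)$);
\item $L(p)\mathbf 1=0$ for $p\ge -1$.
\end{itemize}
This is where the lowest-weight hypothesis genuinely enters the lattice argument.

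Next, PBW-order products of the $kL(n)$, placing modes $\le -2$ on the left and modes $\ge -1$ on the right. This has integral coefficients because $k^2c\in 2\Z$ and $6\mid m^3-m$. It follows that $\tilde V_{\Z}$ is the $\Z$-span of $kL(-m_1)\cdots kL(-m_s)w$ with $m_i\ge 2$ and $w\in V_{\Z}$ homogeneous. (Equivalently, use your right-moved form $v^{j_1}(n_1)\cdots v^{j_r}(n_r)\,kL(\cdot)\cdots kL(\cdot)\mathbf 1$, kill the modes $\ge -1$ on $\mathbf 1$, and commute the remaining $kL(-m)$ back to the left, inducting on the number of $kL$'s.)

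With this in hand, your count goes through: there are finitely many words per weight, using $V_{(n)}=0$ for $n\ll 0$ and the finite rank of each $(V_{\Z})_{(q)}$. The rest of your argument then stands.

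A minor point: $\tilde V_{\Z}$ is $L(0)$-graded because $V_{\Z}$ is graded and each $kL(n)$ is homogeneous of degree $-n$, not because $kL(0)$ acts by integers.
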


\begin{proof}	
	Let $\tilde{V_{\Z}}$ be the vertex subsuperalgebra over $\Z$ of $V$ generated by $\{v^j\}$ and $k\omega$.
 It is clear that $V_{\Z}\subseteq \tilde{V_{\Z}}$.
	We will prove that $\tilde{V_{\Z}}$ is also an integral form of $V$.
	By Proposition \ref{(prop 2.7)}, $\tilde{V_{\Z}}$ is the $\Z$-span of products
	\begin{equation}\label{(eq:3.29)}
		u^{1}(m_1)u^{2}(m_2)\ldots u^{r}(m_r)\mathbf{1},
	\end{equation}
	where each $u^i$ is either $v^j$ or $k\omega$.
	By Lemma \ref{(lem:3.14)}, we can rewrite (\ref{(eq:3.29)})
as an integral combination of
	 elements of the form
	\begin{equation}\label{(eq:3.30)}
		(kL(m_1))\cdots (kL(m_j))v^{j_1}(n_1)\cdots v^{j_s}(n_s)\mathbf{1}
	\end{equation}
	for $m_i,n_i\in \Z, v^{j_i}\in \{v^j\}$.
	Note that $\{v^j\}$ and $k\omega$ are both $\Z_{2}$ and $\Z$-homogeneous.
	The $\Z_{2}$-grading of elements of the form (\ref{(eq:3.30)}) is
	$|v^{j_1}|+\ldots+|v^{j_s}|\equiv i \;(\mathrm{mod}\,2)$ for $i \in \{0,1\}$.
	The conformal weight ($L(0)$-grading) of the form (\ref{(eq:3.30)}) is
(recall (\ref{eq:virel}), and (3.1.54) of \cite{LL})
	$-\sum\limits_{t=1}^{j}m_{j}+\sum\limits_{i=1}^{s}(\mbox{wt}(v^{j_i})-n_{i}-1)$.
So $\tilde{V_{\Z}}$ is compatible with the $\Z_2$-grading of $V$ and
the conformal weight grading of $V$.
That $\tilde{V_{\Z}}$ is an integral form of $V$ as a vector space
can be proved the same as Theorem 5.2 in \cite{M1}.
Therefore, $\tilde{V_{\Z}}$ is an integral form of $V$ as
 vertex operator superalgebra
that contains $k\omega$.
\end{proof}

The following is the result for affine vertex operator superalgebras and their modules.
\begin{prop}\label{(prop:3.16)}
	Let $\ell \in \Z$.
	Suppose $V_{\Z}$ is the integral form $V_{\hat{\g}}(\ell,0)_{\Z}$
or $L_{\hat{\g}}(\ell,0)_{\Z}$ of an affine vertex operator superalgebra associated to a
	 basic classical Lie superalgebra $\g$, excluding types $A(1,1)$ and $D(2,1;a)$ with $a\notin \Z$.
	 Then $\omega \in V_{\Q}$ and $V_{\Z}$ is generated by lowest weight vectors for
	 the Virasoro algebra $\mathcal{L}$,
	so $V_{\Z}$ can be extended to an integral form $\tilde{V_{\Z}}$
containing $k\omega$ for any $k\in \Z$ such that $k^2c\in 2\Z$.
\end{prop}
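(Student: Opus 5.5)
The plan is to verify the two hypotheses of Theorem \ref{thomeg} for $V_{\Z}=V_{\hat{\g}}(\ell,0)_{\Z}$ (respectively $L_{\hat{\g}}(\ell,0)_{\Z}$). We use the description from Theorem \ref{(th3.9)} and Corollary \ref{(coro3.12)}: $V_{\Z}$ is generated by $X_{\beta}(-1)^{(r)}\mathbf{1}$ and $X_{\gamma}(-1)\mathbf{1}$ for $\beta\in\Delta_{\bar 0}$, $\gamma\in\Delta_{\bar 1}$, $r\in\Z_+$. The extension statement then follows directly from Theorem \ref{thomeg}.

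\emph{Step 1: the generators are lowest weight vectors.} Recall the Sugawara construction $\omega=\frac{1}{2(\ell+h^{\vee})}\sum_{i}u_i(-1)u^i(-1)\mathbf{1}$, where $\{u_i\}$ and $\{u^i\}$ are dual homogeneous bases. For $a\in\g$ it gives $[L(m),a(n)]=-n\,a(m+n)$. Each generator $X_{\beta}(-1)^{(r)}\mathbf{1}$ and $X_{\gamma}(-1)\mathbf{1}$ is $\Z_2\times\Z$-homogeneous, of conformal weight $r$ and $1$ respectively. For $m\ge1$ we commute $L(m)$ past each factor $X(-1)$: every commutator produces some $X(m-1)$ with $m-1\ge0$. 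For $m\ge2$ this operator supercommutes with the remaining $X_{\beta}(-1)$'s, since $[X_\beta,X_\beta]=0$ and the central term vanishes. It therefore kills $\mathbf{1}$, because $\hat{\g}_{(\ge0)}$ annihilates the vacuum when $m-1\ge 1$. For $m=1$ we get $X_{\beta}(0)$, which commutes with $X_\beta(-1)$ and annihilates $\mathbf{1}$. Hence $L(m)v=0$ for all $m\in\N$, and each generator is a lowest weight vector. The same holds in the quotient $L_{\hat{\g}}(\ell,0)$.

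\emph{Step 2: $\omega\in V_{\Q}$.} By Theorem \ref{(th3.3)}, $V_{\Q}=\Q\otimes V_{\Z}$ contains $a(-1)b(-1)\mathbf{1}$ for all $a,b$ in the Chevalley basis, that is, for $H_i$ and $X_\alpha$. So it suffices to choose the dual basis of the Chevalley basis with respect to $(\cdot,\cdot)$ and show it has rational coefficients. The form is integer-valued on the Chevalley basis and nondegenerate. Its Gram matrix is therefore an integer matrix with nonzero determinant, so its inverse is rational, and each $u^i$ is a $\Q$-combination of Chevalley basis vectors. Two further points are needed. First, $\ell+h^{\vee}$ is a nonzero integer: this requires $\ell\neq -h^{\vee}$, which is implicitly assumed for a vertex operator superalgebra structure, and $h^\vee\in\Z$ for the normalized form, or at least $h^\vee\in\Q$ because it is computed from the integral Gram data. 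Second, $\omega$ is therefore a $\Q$-combination of the vectors $u_i(-1)u^i(-1)\mathbf{1}\in V_{\Q}$.

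The main obstacle I expect is the normalization issue in Step 2. I must check that $h^{\vee}$ computed relative to the chosen integer-valued form is rational; if it is not, the result should say which scaling is used. I would verify this by expressing $2h^{\vee}$ as the eigenvalue of the Casimir $\sum u_iu^i$ on $\g$. The Casimir has rational entries in the Chevalley basis, so this eigenvalue is rational. With both hypotheses in place, Theorem \ref{thomeg} applied to every $k\in\Z$ with $k^2c\in2\Z$ yields $\tilde{V_{\Z}}$.
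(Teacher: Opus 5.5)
Your proposal is correct and follows essentially the same route as the paper. You verify the two hypotheses of Theorem~\ref{thomeg} by taking the generators $X_{\beta}(-1)^{(r)}\mathbf{1}$, $X_{\gamma}(-1)\mathbf{1}$ from Theorem~\ref{(th3.9)} and Corollary~\ref{(coro3.12)}, checking they are lowest weight vectors via $[L(m),X_{\alpha}(-1)]=X_{\alpha}(m-1)$, and deducing $\omega\in V_{\Q}$ from the rationality of the dual basis. Your extra justifications (the integral Gram matrix, $h^{\vee}\in\Q$ as half the Casimir eigenvalue for the chosen form, and $\ell\neq -h^{\vee}$) make explicit what the paper only asserts.
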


\begin{proof}
	Let $\{u_i\}$ be a Chevalley basis for $\g_{\Z}$ with dual basis $\{u^i\}$
with respect to the form $(\cdot,\cdot)$.
 Take the conformal vector
	\begin{equation*}
		\omega=\frac{1}{2(\ell+h^{\vee})}\sum_{i=1}^{\mathrm{dim}\; \g}(-1)^{\left| u_i \right|}u_i(-1)u^i(-1)\mathbf{1}.
	\end{equation*}
	Since $(\cdot,\cdot)$ is integral on $\g_{\Z}$,
we have $u^i\in \Q \otimes_{\Z}\g_{\Z}$.
With $\ell, h^{\vee}\in \Q$, we get $\omega\in V_{\Q}$.
	Moreover, by Theorem \ref{(th3.9)} and Corollary \ref{(coro3.12)},
 $V_{\Z}$ is generated by the $\Z_2$-homogeneous elements
	$
		X_{\beta}(-1)^{(n)}\mathbf{1}$,
		$X_{\gamma}(-1)\mathbf{1}$
	for $n\in \Z_{+}$, $\beta \in \Delta_{\bar{0}}$, $\gamma\in \Delta_{\bar{1}}$,
	 where $X_{\beta}$, $X_{\gamma}$ are the corresponding root vectors in the Chevalley basis of $\g$.
	By equation (3.3.52) of \cite{Xu}, we have
	\begin{equation}
		[L(m), X_{\alpha}(-1)]=X_{\alpha}(m-1)
	\end{equation}
	for $\alpha \in \Delta$, $m\in \Z$.
	Note that for $\beta\in \Delta_{\bar{0}}$, $m\in\Z$,
$X_{\beta}(m-1)$ commutes with $X_{\beta}(-1)$.
	Then for $m\in \N$, $n\in \Z_+$,
	$\beta\in \Delta_{\bar{0}}$, $\gamma \in \Delta_{\bar{1}}$, we have
	\begin{equation}
\begin{aligned}
& L(m)X_{\beta}(-1)^{(n)}\mathbf{1}=\frac{1}{n}X_{\beta}(-1)^{(n-1)}X_{\beta}(m-1){\bf 1}=0,\\
& L(m)X_{\gamma}(-1)\mathbf{1}=[L(m),X_{\gamma}(-1)]\mathbf{1}=X_{\gamma}(m-1){\bf 1}=0.
\end{aligned}
	\end{equation}
	Since $X_{\beta}(-1)^{(n)}\mathbf{1}$ and $X_{\gamma}(-1){\bf 1}$ are
homogeneous of conformal weight $n$ and $1$, respectively,
	we get that $V_{\Z}$ is generated by $\Z_{2}\times\Z$-homogeneous lowest weight vectors for the Virasoro algebra.
\end{proof}

\subsection{Integral forms in contragredient modules}

 Let $V$ be a vertex operator superalgebra,
  $(W,Y_W)$ a $V$-module
 and $(W',Y_{W'})$
the contragredient module of $W$ (cf. \cite{FHL}, \cite{Xu}, etc.).
It is known that the module $W$ is irreducible if and only if $W'$ is irreducible
(see Proposition 5.3.2 of \cite{FHL}, or \S 3.5 of \cite{Xu}).

Suppose $V$ has an integral form $V_{\Z}$
and $W$ has an integral form $W_{\Z}$.
For $\lambda\in\Z_{2}$,
 define
\begin{equation}
	(W'_{\lambda})_{\Z}=\{w'\in W'_{\lambda}\ | \ \langle w',w\rangle \in \Z \text{ for } w\in (W_{\lambda})_{\Z}\}.
\end{equation}
Let $W'_{\Z}=(W'_{\bar{0}})_{\Z}\oplus (W'_{\bar{1}})_{\Z}$.
Then there is a natural $\Z_2$-grading on $W_{\Z}'$ given by
\begin{equation}\label{(eq:Z2-grad)}
	(W'_{\Z})_{\lambda}:= (W'_{\lambda})_{\Z} \;\;\text{for}\; \lambda \in \Z_2,
\end{equation}
and this grading is compatible with the $\Z_{2}$-grading on $W'$,
i.e. $	(W'_{\Z})_{\lambda}=	W'_{\Z}\cap W'_{\lambda}$.
Furthermore, $W'_{\Z}$ is an integral form
of $W'$ as a super vector space.
We call $W'_{\Z}$ the {\em graded $\Z$-dual} of $W_{\Z}$.

The following two propositions show when
$W'_{\Z}$ is a $V_{\Z}$-module.
Their proofs are similar to the proofs
of Propositions 6.1 and 6.2 of \cite{M1}, respectively.
\begin{prop}\label{(prop4.1)}
	Let $V$ be a vertex operator superalgebra with an integral form $V_{\Z}$. 	
Suppose $V_{\Z}$ is invariant under $\frac{L(1)^n}{n!}$ for $n>0$.
Then $W'_{\Z}$ is invariant under the action of $V_{\Z}$.
\end{prop}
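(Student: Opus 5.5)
The natural route is through the defining property of the contragredient vertex operator. For a vertex operator superalgebra, the contragredient module $W'$ (cf. \cite{FHL}, \cite{Xu}) is defined by
\begin{equation*}
\langle Y_{W'}(v,x)w',w\rangle=(-1)^{|v||w'|}\langle w',Y_W(e^{xL(1)}(-x^{-2})^{L(0)}v,x^{-1})w\rangle
\end{equation*}
for homogeneous $v\in V$, $w'\in W'$, $w\in W$. Possibly the sign convention there differs slightly for odd elements, but any sign is $\pm1$ and so irrelevant for integrality. The plan is to show directly that $\langle v(n)w',w\rangle\in\Z$ for all homogeneous $v\in (V_{\Z})_{\lambda}$, $w'\in (W'_{\Z})_{\mu}$, $w\in (W_{\Z})_{\lambda+\mu}$ and $n\in\Z$. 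This gives $v(n)w'\in (W'_{\Z})_{\lambda+\mu}$ by the definition of the graded $\Z$-dual, because the pairing between different parity components vanishes.

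First I would reduce to $v$ homogeneous for the $\Z_2\times\Z$-grading, which is allowed since $V_{\Z}$ is compatible with that grading. Say $v\in V_{(k)}$. Then $(-x^{-2})^{L(0)}v=(-1)^kx^{-2k}v$, which is an integral sign times a power of $x$. Expanding $e^{xL(1)}=\sum_{j\ge0}x^j\frac{L(1)^j}{j!}$, the hypothesis says that each $\frac{L(1)^j}{j!}v$ lies in $V_{\Z}$. It also has the same parity as $v$, because $L(1)$ is even, and the sum is finite, since $L(1)$ lowers weight and $V$ has weights bounded below. So the right-hand side becomes
\begin{equation*}
\pm\sum_{j\ge0}x^{j-2k}\langle w',Y_W(v_j,x^{-1})w\rangle,\qquad v_j:=\tfrac{L(1)^j}{j!}v\in V_{\Z}.
\end{equation*}

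Next I compare coefficients of powers of $x$. The coefficient of $x^{-n-1}$ on the left is $\langle v(n)w',w\rangle$. On the right it is a finite $\Z$-linear combination of terms $\langle w',v_j(m)w\rangle$, with $m$ determined by $n$, $j$ and $k$. Since $W_{\Z}$ is a $V_{\Z}$-module, each $v_j(m)w$ lies in $W_{\Z}$, with parity $|v|+|w|=\mu$, the parity of $w'$. Therefore each pairing $\langle w',v_j(m)w\rangle$ is an integer, and hence so is $\langle v(n)w',w\rangle$.

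The main point requiring care is the bookkeeping of gradings. For general $w$ I would first decompose $w\in W_{\Z}$ into $\Z_2$-homogeneous components, which lie in $W_{\Z}$ by compatibility. Since $(W'_{\lambda})_{\Z}$ is defined by pairing only against $(W_{\lambda})_{\Z}$, I have to check that the pairing of $W'_{\lambda}$ with $W_{\lambda+\bar1}$ vanishes, which is the convention for the super dual. Beyond this, the only thing to verify is that the sign factor in the super-contragredient formula is always $\pm1$.
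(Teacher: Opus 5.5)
Your proposal is correct and takes essentially the same approach as the paper. The paper's proof simply defers to the proof of Proposition 6.1 of \cite{M1}, which likewise reduces to weight-homogeneous $v\in V_{\Z}$, expands $e^{xL(1)}(-x^{-2})^{L(0)}v$ using the $\frac{L(1)^n}{n!}$-invariance of $V_{\Z}$, and reads off $\langle v(n)w',w\rangle\in\Z$ from the $V_{\Z}$-invariance of $W_{\Z}$. Your version adds the parity bookkeeping and the harmless sign $(-1)^{|v||w'|}$; note that $(-1)^{k}$ is an integer here only because the paper takes conformal weights in $\Z$.
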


\begin{prop}\label{(prop4.2)}
	Let $V$ be a vertex operator superalgebra with an integral form $V_{\Z}$.
	If $V_{\Z}$ is generated by vectors $v$ such that $L(1)v=0$,
 then $V_{\Z}$ is invariant under $\frac{L(1)^n}{n!}$ for $n\ge 0$.
\end{prop}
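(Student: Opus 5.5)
The plan is to follow the proof of Proposition 6.2 of \cite{M1}, using the super version of the conjugation formula for $e^{zL(1)}$. Let $S$ be the set of homogeneous generators $v$ of $V_{\Z}$ with $L(1)v=0$. By Proposition \ref{(prop 2.7)}, $V_{\Z}$ is the $\Z$-span of vectors $u^1(n_1)\cdots u^k(n_k)\mathbf{1}$ with $u^i\in S$. Since $L(1)\mathbf{1}=0$, it suffices to understand how $e^{zL(1)}$ moves past each operator $u(n)$ with $u\in S$. We then show that $\frac{L(1)^m}{m!}$ applied to such a product is again an integral combination of products of the same form.

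The key identity, valid in any vertex operator superalgebra because it depends only on the $\mathfrak{sl}_2$-relations among $L(-1),L(0),L(1)$ and the super commutator formula, is
\[
e^{zL(1)}Y(u,x)e^{-zL(1)}=Y\bigl(e^{z(1-zx)L(1)}(1-zx)^{-2L(0)}u,\;x/(1-zx)\bigr)
\]
(cf. (5.2.38) of \cite{FHL}). This operator is even, so parity signs do not enter. For $u\in S$ homogeneous of weight $\mathrm{wt}\,u$ we have $L(1)u=0$, so the right side becomes $(1-zx)^{-2\mathrm{wt}\,u}Y(u,x/(1-zx))$.

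Next we expand in components. The coefficient of $x^{-n-1}$ gives
\[
e^{zL(1)}u(n)e^{-zL(1)}=\sum_{j\ge 0}\binom{2\mathrm{wt}\,u-n-2}{j}\,(\text{up to sign})\,z^j\,u(n+j),
\]
and every coefficient is an integer because $\mathrm{wt}\,u\in\Z$ (generators are $\Z$-homogeneous in conformal weight, by compatibility of $V_{\Z}$ with the grading). The sum is finite when the operator is applied to any fixed vector. Conjugating repeatedly then gives
\[
e^{zL(1)}u^1(n_1)\cdots u^k(n_k)\mathbf{1}=\prod_i\bigl(e^{zL(1)}u^i(n_i)e^{-zL(1)}\bigr)\mathbf{1},
\]
which lies in $V_{\Z}[z]$. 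Comparing coefficients of $z^m$ shows $\frac{L(1)^m}{m!}V_{\Z}\subseteq V_{\Z}$.

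The main obstacle is the first step. We must verify carefully that the conjugation formula and its binomial coefficient expansion hold unchanged in the super setting, and that the resulting binomial coefficients are integers, i.e. that the $\binom{-2\mathrm{wt}\,u}{j}$-type expansions produce $\Z$-coefficients. The integrality check is clear once each generator is assumed $L(0)$-homogeneous with integral weight. The formula itself follows as in \cite{FHL} by checking it for $z$ infinitesimal, using $[L(1),Y(u,x)]=Y((x^2L(-1)+2xL(0)+L(1))u,x)$; this commutator holds verbatim in the super case because $\omega$ is even.
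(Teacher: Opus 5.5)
Your proof is correct and is essentially the paper's argument, which simply refers to the proof of Proposition 6.2 of \cite{M1}. For $L(1)u=0$, the conjugation formula gives $e^{zL(1)}u(n)e^{-zL(1)}=\sum_{j\ge 0}\binom{2\mathrm{wt}\,u-n-2}{j}z^{j}u(n+j)$, with no extra sign, as one checks from $[L(1),u(n)]=(2\mathrm{wt}\,u-n-2)u(n+1)$; then $e^{-zL(1)}\mathbf{1}=\mathbf{1}$ finishes it.

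The one step to state more carefully is the reduction to homogeneous generators. Because $V_{\Z}$ is compatible with the $\Z_2\times\Z$-grading and $L(1)$ is even of weight $-1$, each homogeneous component of a generator lies in $V_{\Z}$ and is still annihilated by $L(1)$, so you may replace the generators by these components.
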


Then, by Propositions \ref{(prop:3.16)}, \ref{(prop4.1)} and \ref{(prop4.2)},
we have the following result.
\begin{prop}\label{prop4.3}
Suppose $\ell\in\Z$, $\g$
is a basic classical Lie superalgebra
which is not type $A(1,1)$ or $D(2,1,a)$ with $a\notin\Z$.
	Then graded $\Z$-duals of $V_{\hat{g}}(\ell,0)_{\Z}$
(resp. $(L_{\hat{\g}}(\ell,0))_{\Z}$)-modules are also
 modules of $V_{\hat{\g}}(\ell,0)_{\Z}$ (resp. $L_{\hat{\g}}(\ell,0)_{\Z}$).
\end{prop}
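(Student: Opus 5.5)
The plan is to chain Propositions \ref{(prop:3.16)}, \ref{(prop4.2)} and \ref{(prop4.1)}, with one extra check: that the generators are annihilated by $L(1)$. Let $V_{\Z}$ denote either $V_{\hat{\g}}(\ell,0)_{\Z}$ or $L_{\hat{\g}}(\ell,0)_{\Z}$. Let $W$ be a $V$-module with an integral form $W_{\Z}$, meaning a $V_{\Z}$-submodule of $W$ which is an integral form of $W$ as a super vector space and is compatible with the conformal weight grading. We must show that the graded $\Z$-dual $W'_{\Z}$ is a $V_{\Z}$-module. By Proposition \ref{(prop4.1)}, it suffices to show that $V_{\Z}$ is invariant under $\frac{L(1)^n}{n!}$ for all $n>0$. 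By Proposition \ref{(prop4.2)}, this in turn reduces to showing that $V_{\Z}$ is generated by vectors $v$ with $L(1)v=0$.

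For the generators, use Theorem \ref{(th3.9)} and Corollary \ref{(coro3.12)}. They say $V_{\Z}$ is generated, as a vertex subsuperalgebra over $\Z$, by $X_{\beta}(-1)^{(n)}\mathbf{1}$ for $\beta\in\Delta_{\bar 0}$, $n\in\Z_+$, and by $X_{\gamma}(-1)\mathbf{1}$ for $\gamma\in\Delta_{\bar 1}$. In the proof of Proposition \ref{(prop:3.16)} it was shown that $L(m)$ kills these vectors for all $m\in\N$, using $[L(m),X_\alpha(-1)]=X_\alpha(m-1)$ and the fact that $X_\beta(m-1)$ commutes with $X_\beta(-1)$ for $\beta$ even. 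Taking $m=1$ gives $L(1)v=0$ for every generator $v$. These generators are also $\Z_2\times\Z$-homogeneous, which is what Proposition \ref{(prop4.2)} tacitly needs.

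Putting this together, Proposition \ref{(prop4.2)} gives $\frac{L(1)^n}{n!}V_{\Z}\subseteq V_{\Z}$. Proposition \ref{(prop4.1)} then gives that $W'_{\Z}$ is invariant under $V_{\Z}$. Since $W'_{\Z}$ is already known to be a $\Z_2$-compatible integral form of $W'$ as a super vector space, invariance makes it a $V_{\Z}$-submodule of $W'$. It is also compatible with the conformal weight grading, because the pairing respects weight spaces.

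The main obstacle is hidden in Proposition \ref{(prop4.1)}: applying the contragredient vertex operator formula $Y_{W'}(v,x)$ requires the opposite vertex operator $Y(e^{xL(1)}(-x^{-2})^{L(0)}v,x^{-1})$, together with the appropriate parity signs, to keep integrality. This holds exactly because $e^{xL(1)}$ preserves $V_{\Z}$, and $(-x^{-2})^{L(0)}$ introduces only signs on integral weight spaces. Since the paper records Proposition \ref{(prop4.1)} as already established, the proof here needs no further input. The only point to confirm is that $V$ is a vertex operator superalgebra, i.e.\ $\ell\neq -h^\vee$, so that $L(1)$ and contragredient modules make sense; I would state this tacitly as the standing assumption.
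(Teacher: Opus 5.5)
Your proposal is correct and takes the same route as the paper. The paper proves this statement by chaining Proposition~\ref{(prop:3.16)}, Proposition~\ref{(prop4.2)} and Proposition~\ref{(prop4.1)}; the proof of Proposition~\ref{(prop:3.16)} is what shows that the generators $X_{\beta}(-1)^{(n)}\mathbf{1}$ and $X_{\gamma}(-1)\mathbf{1}$ are annihilated by $L(1)$. Your remark that $\ell\neq -h^{\vee}$ must be assumed, so that $V$ is a vertex operator superalgebra, is a fair point the paper also leaves implicit.
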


A $V$-module $W$ is equivalent as a $V$-module to its contragredient $W'$
 if and only if there is a non-degenerate bilinear form $( \cdot,\cdot)_{W}$
 on $W$ that is {\em invariant} in the sense that
\begin{eqnarray}
	\begin{aligned}
			&(W_{\lambda}, W_{\mu})_{W}=0\;\; \mbox{if}\;\; \lambda\ne \mu \in \Z_{2}; \\
		&\left(  Y_{W}(v,x)w_1,w_2\right)_{W} =(-1)^{|v||w_1|}\left(  w_1,Y_W(e^{xL(1)}(-x^{-2})^{L(0)}v,x^{-1})w_2\right)_{W} \label{(eq:4.5)}
	\end{aligned}
\end{eqnarray}
for $u\in V_{\bar{0}}\cup V_{\bar{1}}$, $w_1\in W_{\bar{0}}\cup W_{\bar{1}}$, $w_2\in W$
 (see Lemma 3.5.4 in \cite{Xu}, or Remark 5.3.3 of \cite{FHL}, etc.).
 The space of invariant bilinear forms on $V$ is linearly isomorphic to the space
 $\mathrm{Hom}_{\C}(V_{(0)}/L(1)V_{(1)},\C)$
 (Theorem 3.5.10 in \cite{Xu}, or Theorem 3.1 in \cite{L94}).
 If $V$ is an affine vertex operator superalgebra,
 then $V_{(0)}=\C\mathbf{1}$ and $L(1)V_{(1)}=0$.
 So invariant forms on $V$ are unique up to scale.
 And $V$ has a non-degenerate invariant bilinear form only when $V=L_{\hat{\g}}(\ell,0)$
 (see Theorem 3.5.12 of \cite{Xu}, or Theorem 4.9 of \cite{L94}).

 Let $V$ be a vertex operator superalgebra.
 Choose an invariant bilinear form and an integral form $V_{\Z}$ of $V$.
 Then the graded $\Z$-dual $V'_{\Z}$ is an integral form of $V$ as a super
 vector space
 and is invariant under the action of $V_{\Z}$.
 But $V'_{\Z}$ may not be closed under vertex superalgebra products.
 Hence, $V'_{\Z}$ need not be an integral form of $V$ as a vertex superalgebra.
 We have the following result of a relation between $V_{\Z}$ and $V'_{\Z}$
  (cf. the proof of Proposition 6.4 of \cite{M1}).

\begin{prop}\label{prop4.5}
 	Suppose $V$ is equivalent to $V'$ as a $V$-module, $V_{(0)}=\C\mathbf{1}$
 	and $V$ has an integral form $V_{\Z}$ which is invariant under $\frac{L(1)^n}{n!}$ for $n\in \Z_+$.
 Let $\left( \cdot,\cdot \right)_{V}$ be
 	a non-degenerate invariant bilinear form on $V$
 	such that $\left( \mathbf{1},\mathbf{1} \right)\in \Z\setminus \{0\}$.
 	Identify $V_{\Z}'$ with a lattice in $V$ through
 	$\left( \cdot,\cdot \right)_{V}$.
 	 Then $V_{\Z}\subseteq V_{\Z}'$.
 \end{prop}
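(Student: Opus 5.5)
The goal is to show that every $v\in V_{\Z}$ satisfies $\left(v,w\right)_{V}\in\Z$ for all $w\in V_{\Z}$, with parity matching. Under the identification of $V'_{\Z}$ with a lattice in $V$ via $\left(\cdot,\cdot\right)_{V}$, this is exactly the statement $v\in V'_{\Z}$. The plan is to follow the proof of Proposition 6.4 of \cite{M1}, inserting signs where the super setting requires them.

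Parity compatibility is immediate from invariance, since $(V_{\lambda},V_{\mu})_{V}=0$ for $\lambda\neq\mu$. So we may restrict to homogeneous $v\in(V_{\Z})_{\lambda}$ and $w\in(V_{\Z})_{\lambda}$. Because $V_{\Z}$ is compatible with the conformal weight grading, and the invariant form pairs $V_{(n)}$ only with $V_{(n)}$, we may also take $v,w$ homogeneous of the same weight $n$.

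Next, write $v=v(-1)\mathbf{1}$. Applying the invariance identity with $w_1=\mathbf{1}$ and $w_2=w$, we take the appropriate coefficient of $x$:
\[
\left(v,w\right)_{V}=\left(v(-1)\mathbf{1},w\right)_{V}=\pm\left(\mathbf{1},\,\mathrm{Res}_{x}\,x^{0}\,Y(e^{xL(1)}(-x^{-2})^{L(0)}v,x^{-1})w\right)_{V}.
\]
Expanding $e^{xL(1)}=\sum_{j\ge 0}x^{j}\frac{L(1)^{j}}{j!}$ and using $(-x^{-2})^{L(0)}v=(-1)^{n}x^{-2n}v$, the right-hand side becomes $\pm\sum_{j}\left(\mathbf{1},\bigl(\tfrac{L(1)^{j}}{j!}v\bigr)(k_j)w\right)_{V}$ for suitable integers $k_j$, with only finitely many terms nonzero. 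By hypothesis, $\frac{L(1)^{j}}{j!}v\in V_{\Z}$. Since $V_{\Z}$ is closed under all products, each vector $u_j:=\bigl(\frac{L(1)^{j}}{j!}v\bigr)(k_j)w$ lies in $V_{\Z}$. These terms pair nontrivially with $\mathbf{1}$ only through their components in $V_{(0)}=\C\mathbf{1}$, so we may replace $u_j$ by its weight-zero component, which still lies in $V_{\Z}$ by grading compatibility.

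Then $V_{\Z}\cap V_{(0)}=V_{\Z}\cap\C\mathbf{1}=\Z\mathbf{1}$ (the analogue of Proposition 2.3 of \cite{M1} noted earlier). Hence each contribution is an integer multiple of $\left(\mathbf{1},\mathbf{1}\right)_{V}\in\Z$, and therefore $\left(v,w\right)_{V}\in\Z$. The signs $(-1)^{|v||w_1|}$ and $(-1)^{n}$ are harmless because they are $\pm1$.

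The main obstacle is bookkeeping in the invariance identity: identifying which coefficient of $x$ produces $(v(-1)\mathbf{1},w)$ and checking that each resulting mode index is an integer. This holds because $V$ is $\Z$-graded and $x^{-2n}$ with $n\in\Z$. If the weights are only half-integral in the super case, I would restrict to the parity-compatible integral weights where the pairing is nonzero, or track $(-x^{-2})^{L(0)}$ by its homogeneous component.
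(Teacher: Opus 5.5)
Your proof is correct and is essentially the argument the paper defers to (the proof of Proposition 6.4 of \cite{M1}). There one notes that $\mathbf{1}\in V'_{\Z}$, because $V_{(0)}=\C\mathbf{1}$, $V_{\Z}\cap\C\mathbf{1}=\Z\mathbf{1}$ and $(\mathbf{1},\mathbf{1})_{V}\in\Z$, and that $V'_{\Z}$ is $V_{\Z}$-invariant by Proposition \ref{(prop4.1)}, so $v=v(-1)\mathbf{1}\in V'_{\Z}$ for every $v\in V_{\Z}$. Your direct use of the invariance identity with $w_1=\mathbf{1}$ simply unwinds Proposition \ref{(prop4.1)} in this special case. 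Since the paper's vertex operator superalgebras are $\Z$-graded, your half-integral-weight caveat is not needed.
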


\end{document}